\def\Omegaext{{\Omega^c}}
\newcommand{\BLm}{{\mbox{\rm BL}^m({\mathbb R}^d)}}
\newcommand{\eremk}{\hbox{}\hfill\rule{0.8ex}{0.8ex}}
\newtheorem{theorem}{Theorem}[section]
\newtheorem{corollary}[theorem]{Corollary}
\newtheorem{lemma}[theorem]{Lemma}
\newtheorem{proposition}[theorem]{Proposition}
\newtheorem{remark}[theorem]{Remark}
\newenvironment{proof}[1][Proof]{\noindent\textbf{#1.} }{\ \rule{0.5em}{0.5em}}
\newdimen\dummy
\begin{document}

\title{On thin plate spline interpolation}
% Use \titlerunning{Short Title} for an abbreviated version of
% your contribution title if the original one is too long
\author{M. L\"ohndorf  \\
Kapsch TrafficCom, Am Europlatz 2, A-1120 Wien
\and J.M. Melenk\thanks{ melenk@tuwien.ac.at }  \\
Technische Universit\"at Wien, A-1040 Wien}
%
% Use the package "url.sty" to avoid
% problems with special characters
% used in your e-mail or web address
%
\maketitle

%\abstract*{
%foo}
\abstract{We present a simple, PDE-based proof of the result \cite{johnson01} by M.~Johnson that 
the error estimates of J.~Duchon \cite{duchon78} for thin plate spline interpolation can be improved by $h^{1/2}$. 
We illustrate that ${\mathcal H}$-matrix techniques can successfully be employed to solve very large 
thin plate spline interpolation problems. 
}
%------------------------
\section{Introduction and Main Results}
%------------------------
%\cite{johnson01,johnson04}
%\cite{powell94,melenk-gutzmer01}
%\cite{wendland05,duchon78,buhmann03}
%\cite{duchon76}
Interpolation with so-called thin plate splines (also known as surface splines, $D^m$-splines, or polyharmonic splines) 
is a classical topic in spline theory. It is concerned with the following interpolation problem 
(\ref{eq:variational-formulation}): 
Given a (sufficiently smooth) function $f$ and points 
$x_i \in {\mathbb R}^d$, $i=1,\ldots,N$, find the minimizer $If$ of the problem
\begin{align} 
\label{eq:variational-formulation}
\mbox{ minimize } \qquad 
|v|_{H^m({\mathbb R}^d)} \qquad \mbox{ under the constraint $v(x_i) = f(x_i)$, $i=1,\ldots,N$.}
\end{align}
Here, the seminorm $|v|_{H^m({\mathbb R}^d)}$ is induced by the bilinear form 
\begin{equation}
\langle v,w\rangle_{m} := 
\sum_{|\alpha| = m} \frac{m!}{\alpha!} \int_{{\mathbb R}^d} D^\alpha v D^\alpha w\,dx.
\end{equation}
For $m > d/2$ and under very mild conditions on the point distribution, a unique minimizer $If$ exists. 
The name ``thin plate splines'' originates from the fact in the simplest case $m = d = 2$, $If$ can be represented
in terms of translates of the fundamental solution of the biharmonic equation. For general $m$ the interpolant 
$If$ can be expressed in terms fundamental solutions of $\Delta^m$:  
There are constants $c_i \in {\mathbb R}$, $i=1,\ldots,N$, 
and a polynomial $\pi \in {\mathbb P}_{m-1}$
of degree $m-1$ such that (with the Euclidean norm $\|\cdot\|_2$ on ${\mathbb R}^d$)
\begin{equation} 
\label{eq:interpolation-representation}
If(x) = \sum_{i=1}^N  c_i \phi_m(\|x - x_i\|_2) + \pi_{m-1}(x), 
\qquad \sum_{i=1}^N c_i q(x_i) = 0 \qquad \forall q \in {\mathbb P}_{m-1},
\end{equation}
where $\phi_m$ is given explicitly by 
\begin{equation}
\phi_m(r) = 
\begin{cases}
r^{2m-d} \log r & d \mbox{ even } \\
r^{2m-d} & d \mbox{ odd.} 
\end{cases}
\end{equation}
The representation (\ref{eq:interpolation-representation}) allows one to reformulate (\ref{eq:variational-formulation})
as the problem of finding the coefficients $c_i$ and the polynomial $\pi_{m-1}$ so that the (constrained) 
interpolation problem (\ref{eq:interpolation-representation}) is solved. 
The classical error analysis for (\ref{eq:variational-formulation}) 
is formulated in terms fill-distance: For a bounded domain $\Omega \subset {\mathbb R}^d$ 
and points $X_N = \{x_i\,|\, i=1,\dots,N\} \subset\Omega$, the \emph{fill distance} $h(X_N)$ is given by 
\begin{equation} 
\label{eq:fill-distance}
h(X_N):= \sup_{x \in {\Omega}} \inf_{i=1,\ldots,N} \|x - x_i\|_2. 
\end{equation} 
Starting with the seminal papers by J.~Duchon \cite{duchon76,duchon78} the error $f - If$ on $\Omega$ is controlled 
in terms of $h$ and the regularity properties of $f$ (on $\Omega$): 
\begin{proposition}[\protect{\cite[Prop.~3]{duchon78}}]
\label{prop:duchon}
Let $\Omega \subset {\mathbb R}^d$ be a bounded Lipschitz domain. Let $m > d/2$, $k \in {\mathbb N}$, 
$p \in [2,\infty]$ be such that $H^m(\Omega) \subset W^{k,p}(\Omega)$. Then, there are constants 
$h_0$, $C_1$, $C_2>0$ depending only on $\Omega$, $m$, $d$ such that for any collection 
$X_N = \{x_1,\ldots,x_N\} \subset \Omega$ with fill distance $h:=h(X_N) \leq h_0$
$$
\sum_{|\alpha| = k} \|D^\alpha (f - If)\|_{L^p(\Omega)} \leq C_1 h^{m-k-d/2+d/p} |E^\Omega f - If|_{H^m({\mathbb R}^d)} 
\leq C_2 h^{m-k-d/2+d/p} |f|_{H^{m}(\Omega)}; 
$$
here, $E^\Omega f$ denotes the minimum norm extension of $f$ defined in (\ref{eq:minimal-norm-extension}). 
\end{proposition}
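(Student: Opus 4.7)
The plan is to follow Duchon's original strategy, which splits naturally into the two inequalities stated. The second inequality is a short Pythagorean computation coming from the variational characterization of $If$, while the first inequality is a localized Bramble-Hilbert argument combined with a norming-set property for polynomials restricted to $X_N$.

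For the second inequality I would argue as follows. Both $E^\Omega f$ and $If$ take the same values on $X_N$, so $v := E^\Omega f - If$ satisfies $v(x_i) = 0$. The Euler-Lagrange conditions for (\ref{eq:variational-formulation}) give $\langle If, w\rangle_m = 0$ for every $w \in H^m(\mathbb{R}^d)$ with $w(x_i) = 0$ (together with the polynomial side conditions in (\ref{eq:interpolation-representation})). Taking $w = v$ yields $\langle If, E^\Omega f - If\rangle_m = 0$, so
\begin{equation*}
|E^\Omega f - If|^2_{H^m(\mathbb{R}^d)} = |E^\Omega f|^2_{H^m(\mathbb{R}^d)} - |If|^2_{H^m(\mathbb{R}^d)} \leq |E^\Omega f|^2_{H^m(\mathbb{R}^d)}.
\end{equation*}
The second inequality then follows from the extension bound $|E^\Omega f|_{H^m(\mathbb{R}^d)} \leq C\, |f|_{H^m(\Omega)}$, which is available for bounded Lipschitz $\Omega$ via (\ref{eq:minimal-norm-extension}) (or, more generally, Stein's extension theorem).

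For the first inequality I set $u := E^\Omega f - If \in H^m(\mathbb{R}^d)$ and exploit that $u$ vanishes on $X_N$. Fix a constant $C_0 > 0$ (depending only on $m$, $d$, and $\Omega$) and cover $\Omega$ by balls $B_j = B(\xi_j, C_0 h)$ of bounded overlap, each contained in a fixed neighborhood of $\Omega$. On every $B_j$, the Bramble-Hilbert lemma produces $\pi_j \in \mathbb{P}_{m-1}$ with
\begin{equation*}
\|u - \pi_j\|_{W^{k,p}(B_j)} \leq C\, h^{m-k-d/2+d/p}\, |u|_{H^m(B_j)},
\end{equation*}
where the exponent records the correct scaling of the embedding $H^m \hookrightarrow W^{k,p}$ on a ball of radius $h$. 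The key geometric ingredient is a norming-set inequality: once $h$ is small enough, $X_N \cap B_j$ is sufficiently rich that
\begin{equation*}
\|\pi\|_{L^\infty(B_j)} \leq C\, \max_{x_i \in X_N \cap B_j} |\pi(x_i)| \qquad \forall \pi \in \mathbb{P}_{m-1}.
\end{equation*}
Since $u(x_i) = 0$, this yields $\|\pi_j\|_{L^\infty(B_j)} \leq C\, \|u - \pi_j\|_{L^\infty(B_j)} \leq C\, h^{m-d/2}\, |u|_{H^m(B_j)}$ via the embedding $H^m \hookrightarrow L^\infty$ (valid because $m > d/2$). A Markov inequality on $B_j$ upgrades $L^\infty$-control of $\pi_j$ into $W^{k,p}$-control with factor $h^{-k+d/p}$, and the triangle inequality delivers the local bound $\|D^\alpha u\|_{L^p(B_j)} \leq C\, h^{m-k-d/2+d/p}\, |u|_{H^m(B_j)}$ for $|\alpha| = k$. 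Summing over $j$ (or taking maxima when $p = \infty$) and using finite overlap together with $\ell^2 \hookrightarrow \ell^p$ for $p \geq 2$ produces the global estimate with $|u|_{H^m(\mathbb{R}^d)}$ on the right-hand side.

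The hardest part is establishing the norming-set inequality on balls of radius $\sim h$. It amounts to showing that, once $h \leq h_0$, each such ball contains at least $\dim \mathbb{P}_{m-1}$ points of $X_N$ in a quantitatively non-degenerate configuration (a standard Lagrange-type argument, with $C_0$ chosen large enough relative to the fill distance to force this). A secondary issue is handling boundary balls, for which one uses the Lipschitz regularity of $\partial\Omega$ and a slightly enlarged ambient domain so that the covering and the norming-set property both survive. The remaining ingredients—Bramble-Hilbert, Markov, Sobolev embedding, and the finite-overlap covering—are standard once the geometric step is in place.
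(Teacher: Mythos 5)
The paper does not prove Proposition~\ref{prop:duchon}: it is quoted verbatim from Duchon \cite{duchon78}, and the rest of the paper treats it as a known black box (indeed Corollary~\ref{cor:duchon} and the proofs of Lemmas~\ref{lemma:domain-part} and~\ref{lemma:exterior-domain-part} invoke it as given). So there is no in-paper argument for you to compare against; what you have written is a reconstruction of Duchon's own proof, and it is essentially the right one.

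Your two-step structure is exactly the classical route. The second inequality via the Pythagorean identity $\langle If, E^\Omega f - If\rangle_m = 0$ and the bound $|E^\Omega f - If|_{H^m(\mathbb R^d)} \le |E^\Omega f|_{H^m(\mathbb R^d)}$ is correct; the one detail you pass over is that to end up with the seminorm $|f|_{H^m(\Omega)}$ rather than the full norm $\|f\|_{H^m(\Omega)}$ (which is what the Stein extension or (\ref{eq:minimum-norm-extension-bound}) immediately give), you should use the invariance $E^\Omega(f+\pi)=E^\Omega f+\pi$ and $I(f+\pi)=If+\pi$ for $\pi\in\mathbb P_{m-1}$, write $|E^\Omega f|_m=|E^\Omega(f-\pi)|_m\le C\|f-\pi\|_{H^m(\Omega)}$, take the infimum over $\pi$, and finish with Deny--Lions. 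This is implicit in your remark but worth stating, since it is precisely why the seminorm appears on the right.

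For the first inequality, the ball covering at scale $C_0 h$, local Bramble--Hilbert with the correctly scaled constant $h^{m-k-d/2+d/p}$, the norming-set inequality for $\mathbb P_{m-1}$ on $X_N\cap B_j$, Markov to upgrade the polynomial bound to $W^{k,p}$, and the finite-overlap plus $\ell^2\hookrightarrow\ell^p$ summation are all exactly Duchon's ingredients. You have correctly identified the norming-set step (existence, for $h\le h_0$ and $C_0$ large, of a quantitatively $\mathbb P_{m-1}$-unisolvent subset of $X_N$ in each $B_j$) and the treatment of boundary balls (Lipschitz geometry ensuring a uniform interior-cone/proportional-measure condition so the norming constants stay bounded) as the two places where real work remains; both are standard but are also the only nontrivial parts of the proof. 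Two small precision points: $u=E^\Omega f - If$ lives in $\BLm$, not in $H^m(\mathbb R^d)$, but this is harmless because the local Bramble--Hilbert step only uses $u\in H^m(B_j)$ and the final summation only uses the global seminorm; and the unisolvent points must come from $X_N\subset\Omega$, so the norming-set argument must be run on $B_j\cap\Omega$, which is where the Lipschitz assumption actually enters.

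In short, your sketch is faithful to Duchon's argument and would be correct once the norming-set lemma and the polynomial reduction to the seminorm are filled in; there is no competing proof in this paper to contrast it with.
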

In Proposition~\ref{prop:duchon} and throughout the present note, we will use the standard notation for Sobolev
spaces $W^{s,p}$ and Besov spaces $B^{s}_{2,q}$; we refer to \cite{tartar07} for their definition. Interpolation space will always be understood by the so-called ``real method'' (also known as ``$K$-method'')
as described, e.g., in  \cite{tartar07,triebel95}. 
We will use extensively that the scales of Sobolev and Besov spaces are interpolation spaces.  
We will also use 
the notation $|\nabla^j f|^2 = \sum_{|\alpha| = j} \frac{j!}{\alpha!} |D^\alpha f|^2$. 

It is worth noting that the interpolation operator $I$ is a projection so that $I (f - If) = 0$. 
Proposition~\ref{prop:duchon} applied to the function $f - If$ therefore  yields 
\begin{corollary}
\label{cor:duchon}
Under the assumptions of Proposition~\ref{prop:duchon} there holds 
$$
\sum_{|\alpha| = k} \|D^\alpha (f - If)\|_{L^p(\Omega)} 
\leq C_2 h^{m-k-d/2+d/p} |f - If|_{H^{m}(\Omega)}. 
$$
\end{corollary}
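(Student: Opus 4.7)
The plan is to apply Proposition~\ref{prop:duchon} not to $f$ itself but to the auxiliary function $g := f - If$. The statement of the corollary then follows essentially for free once two ingredients are established: (i) that $g$ satisfies the hypotheses of Proposition~\ref{prop:duchon} (in particular $g \in H^m(\Omega)$), and (ii) that $Ig = 0$, so that the left-hand side of Proposition~\ref{prop:duchon}, applied to $g$, collapses exactly to the desired quantity $\sum_{|\alpha|=k}\|D^\alpha(f-If)\|_{L^p(\Omega)}$.

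Point (i) is immediate: $f \in H^m(\Omega)$ by hypothesis, and the minimizer $If$ of (\ref{eq:variational-formulation}) lies in $H^m({\mathbb R}^d)$, hence its restriction to $\Omega$ lies in $H^m(\Omega)$. Point (ii) is the projection property of $I$, which is a purely variational fact. Since $If$ takes the values $If(x_i)=f(x_i)$ at the nodes, it is an admissible competitor in the variational problem (\ref{eq:variational-formulation}) with prescribed data $\{If(x_i)\}$. As it trivially minimises its own seminorm among all such competitors, uniqueness of the minimiser yields $I(If)=If$. Linearity of $I$ then gives $Ig = If - I(If) = 0$.

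With these two points in hand, one simply invokes the first inequality of Proposition~\ref{prop:duchon} with $f$ replaced by $g$: the chain $g - Ig = g = f - If$ identifies the left-hand side, while the term $|E^\Omega g - Ig|_{H^m({\mathbb R}^d)} = |E^\Omega g|_{H^m({\mathbb R}^d)}$ on the right is dominated by $C\,|g|_{H^m(\Omega)} = C\,|f-If|_{H^m(\Omega)}$ via the norm-minimality of the extension $E^\Omega$ (absorbed into $C_2$). I do not anticipate any real obstacle here; the only point that warrants care is to verify that the projection identity $I(If)=If$ is legitimate at the assumed regularity, but this is built into the well-posedness of (\ref{eq:variational-formulation}) used to \emph{define} $I$ in the first place and requires no additional smoothness of $f$.
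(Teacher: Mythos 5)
Your proof is correct and matches the paper's own argument: the paper's entire proof of Corollary~\ref{cor:duchon} is the one-line observation that $I$ is a projection, so that $I(f-If)=0$, after which Proposition~\ref{prop:duchon} applied to $f-If$ yields the claim. You supply the same idea, with a slightly more detailed verification of the projection identity $I(If)=If$ and of $f-If\in H^m(\Omega)$.
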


A natural question in connection with Proposition~\ref{prop:duchon} is whether the convergence rate can be 
improved by requiring additional regularity of $f$. 
It turns out that boundary effects limits this. We mention that a doubling of the convergence rate 
is possible by imposing certain homogeneous boundary conditions on high order derivatives as 
shown in \cite{melenk-gutzmer01} and, more abstractly, in \cite{schaback99}. 
If this highly fortuitous setting is not given, then only a small further gain is possible as shown 
by M.~Johnson, \cite{johnson01,johnson04}. For example, he showed that a gain of $h^{1/2}$ is possible 
if $f \in B^{m+1/2}_{2,1}(\Omega)$ 
and $\partial\Omega$ is sufficiently smooth. The purpose the present note is to give a short and simple proof 
of this result using different tools, namely, those  from elliptic PDE theory. 
The techniques also open the door to reducing the 
smoothness assumptions on $\partial\Omega$ in 
\cite{johnson01,johnson04} to Lipschitz continuity as discussed in more detail in Remark~\ref{rem:discussion}. 
Our main result therefore is a simpler proof of: 
\begin{proposition}[\protect{\cite{johnson01}}]
\label{prop:main}
Let $\Omega \subset {\mathbb R}^d$ be a bounded Lipschitz domain with sufficiently smooth boundary. Then there
are constants $h_0$, $C_1$, $C_\delta > 0$ that  depend solely on $\Omega$, $m$, $d$, and $\delta$ 
such that for any collection 
$X = \{x_1,\ldots,x_N\}\subset\Omega$ with fill distance $h:= h(X_N) \leq h_0$ there holds 
\begin{align}
\label{eq:prop:main-1}
|E^\Omega f - If|_{H^m({\mathbb R}^d)} & \leq C_1 h^{1/2} \|f\|_{B^{m+2/1}_{2,1}(\Omega)}, \\
\label{eq:prop:main-2}
|E^\Omega f - If|_{H^m({\mathbb R}^d)} & \leq C_\delta h^{\delta} \|f\|_{H^{m+\delta}(\Omega)}, 
\quad 0 \leq \delta < 1/2.
\end{align}
In particular, therefore, the estimates of \cite[Prop.~3]{duchon78} (i.e., Prop.~\ref{prop:duchon}) can 
be improved by $h^{1/2}$ for $f \in B^{m+1/2}_{2,1}(\Omega)$ and by $h^{\delta}$ 
for $f \in H^{m+\delta}(\Omega)$. 
\end{proposition}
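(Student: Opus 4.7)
My plan is to bound $w := E^\Omega f - If$ by combining the variational characterization of the two minimizers with a sampling inequality in $V_0 := \{v \in H^m(\mathbb{R}^d) : v(x_i) = 0 \;\forall i\}$ and elliptic regularity for the polyharmonic operator in $\Omegaext$. First, $w$ vanishes at every data point, so $Iw = 0$, and the optimality of $If$ as the $H^m$-seminorm minimizer over the admissible set gives the Galerkin identity
$$|w|_{H^m(\mathbb{R}^d)}^2 = \langle E^\Omega f - If, w\rangle_m = \langle E^\Omega f, w\rangle_m,$$
reducing the task to bounding $\langle E^\Omega f, w\rangle_m$ for an arbitrary $w \in V_0$.

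The next ingredient is a sampling inequality. Applying Corollary~\ref{cor:duchon} to $w$ (noting $Iw = 0$) gives $\|w\|_{L^2(\Omega)} \leq C h^m |w|_{H^m(\mathbb{R}^d)}$, and real interpolation with the trivial bound $\|w\|_{H^m(\Omega)} \leq |w|_{H^m(\mathbb{R}^d)}$ yields
$$\|w\|_{H^s(\Omega)} \leq C h^{m-s} |w|_{H^m(\mathbb{R}^d)}, \qquad 0 \leq s \leq m,$$
together with its consequences for the traces of $w$ and its normal derivatives on $\partial\Omega$ via the trace theorem.

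Now I would exploit the PDE structure of $E^\Omega f$: the Euler--Lagrange condition for the minimum-norm extension forces $(-\Delta)^m E^\Omega f = 0$ in $\Omegaext$ with Dirichlet data on $\partial\Omega$ inherited from $f$. Splitting $\langle E^\Omega f, w\rangle_m = \langle f, w\rangle_{m,\Omega} + \langle E^\Omega f, w\rangle_{m,\Omegaext}$, a repeated integration by parts in $\Omegaext$ converts the exterior contribution into a sum of boundary pairings $\sum_{j=0}^{m-1}\int_{\partial\Omega} (\text{Neumann-type trace of } E^\Omega f)\cdot\gamma_j(w)\,d\sigma$. In the interior term one transfers $\delta$ fractional derivatives from $w$ onto $f$ via Sobolev/Besov duality; the boundary contributions produced by this transfer combine with their exterior counterparts (thanks to the $H^m(\mathbb{R}^d)$-continuity of the Dirichlet traces of $E^\Omega f$ up to order $m-1$) into jumps of high-order derivatives across $\partial\Omega$, which are controlled by elliptic shift estimates for the exterior polyharmonic Dirichlet problem. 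Each remaining volume pairing is then bounded by $\|f\|_{H^{m+\delta}(\Omega)}\,h^\delta |w|_{H^m(\mathbb{R}^d)}$ via the sampling inequality, and each boundary pairing by $\|f\|_{B^{m+1/2}_{2,1}(\Omega)}\,h^{1/2}|w|_{H^m(\mathbb{R}^d)}$; canceling one factor of $|w|_{H^m(\mathbb{R}^d)}$ yields (\ref{eq:prop:main-1}) and (\ref{eq:prop:main-2}).

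The main obstacle will be the endpoint case $\delta = 1/2$. At that threshold the Sobolev scale just fails to support the interior duality pairing and the boundary $L^2$-trace estimate simultaneously: $f \in H^{m+1/2}(\Omega)$ does not even guarantee that $\nabla^m f$ admits a trace in $L^2(\partial\Omega)$. Replacing $H^{m+1/2}$ by the slightly smaller Besov space $B^{m+1/2}_{2,1}(\Omega)$ in (\ref{eq:prop:main-1}) is precisely the minimal strengthening that makes all $m$-th order derivative traces land in $L^2(\partial\Omega)$ and closes the argument at the critical threshold; for $\delta < 1/2$ one stays strictly below the threshold and (\ref{eq:prop:main-2}) follows from ordinary interpolation in the Sobolev scale. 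The Lipschitz regularity of $\partial\Omega$ enters only through the elliptic shift theorem used to control the Neumann traces of $E^\Omega f$ in the boundary pairings, which explains the remark that the smoothness hypothesis of \cite{johnson01,johnson04} can in principle be relaxed.
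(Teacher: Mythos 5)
Your high-level skeleton matches the paper's: exploit the orthogonality $\langle E^\Omega f - If, If\rangle_m = 0$ to reduce to $\langle E^\Omega f - If, E^\Omega f\rangle_m$, split this into $B_\Omega(f-If,f) + B_{\Omegaext}(E^\Omega f - If, E^\Omega f)$, use $\Delta^m E^\Omega f = 0$ in $\Omegaext$ to enable repeated integration by parts there, control traces of $w := E^\Omega f - If$ via Duchon's estimates (Corollary~\ref{cor:duchon}) together with the multiplicative trace inequality, and use elliptic regularity (Lemma~\ref{lemma:ADN}, Corollary~\ref{cor:shift-theorem}) for the Neumann-type traces of $E^\Omega f$. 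You also correctly identify why $B^{m+1/2}_{2,1}$ rather than $H^{m+1/2}$ is the right endpoint space: it is the minimal space in which $\nabla^m f$ has an $L^2(\partial\Omega)$ trace (Lemma~\ref{lemma:Besov-trace}), and (\ref{eq:prop:main-2}) does follow from (\ref{eq:prop:main-1}) by interpolation, exactly as in the paper.

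There is however a genuine gap in the middle of your argument, precisely where you say that the interior boundary contributions ``combine with their exterior counterparts into jumps of high-order derivatives.'' This mechanism is not available. In $\Omegaext$ one can integrate by parts $m$ times because $\Delta^m E^\Omega f = 0$ eliminates the volume term, producing boundary pairings between $\nabla^{m-j}w$ and $\nabla^{m+j-1}E^\Omega\widetilde f$ for $j=1,\dots,m$, i.e.\ exterior traces up to order $2m-1$. In $\Omega$, by contrast, $f$ has at most $m+1/2$ derivatives, so at most a single integer integration by parts is permissible, producing boundary terms only at orders $m-1$ and $m$; the two families of boundary terms do not have matching derivative orders and cannot be assembled into jumps. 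Moreover, the $m$-fold exterior integration by parts a priori requires $\widetilde f \in B^{2m-1/2}_{2,1}(\Omega)$ to make sense of $\gamma_0^c\nabla^{2m-1}E^\Omega\widetilde f$, which is far more regularity than the target $B^{m+1/2}_{2,1}(\Omega)$. The paper's essential tool for closing this gap — and the one your sketch has no substitute for — is the functional-interpolation Lemma~\ref{lemma:functional-interpolation}: one first proves the estimate for smooth $\widetilde f$ with a careful ledger of $h$-powers against derivative orders (see (\ref{eq:nuce-4})), then applies Lemma~\ref{lemma:functional-interpolation} to the linear functional $\widetilde f \mapsto B_{\Omegaext}(E^\Omega f - If, E^\Omega\widetilde f)$ to bring the regularity requirement on $\widetilde f$ down to $B^{m+1/2}_{2,1}(\Omega)$. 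The same lemma is used again in the interior part (Lemma~\ref{lemma:domain-part}), where a single integration by parts leaves behind an $h$-weighted $\|\widetilde f\|_{H^{m+1}}$ volume term that must be absorbed. Without some version of this reiteration/interpolation device your proof would deliver the gain $h^{1/2}$ only under an unnecessarily strong regularity assumption on $f$.
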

\begin{remark} 
A common route to error estimates for $f - If$ is via the so-called ``power function'' $P(x)$. Indeed, 
classical pointwise estimates take the form 
$|f(x) - If(x)| \leq P(x) |E^\Omega f - If|_{H^m({\mathbb R}^d)}$ 
(cf., e.g.,  \cite[Prop.~{5.3}]{buhmann03}, \cite[Thm.~{11.4}]{wendland05}) and $P$  is subsequently 
estimated in terms of the fill distance $h$.  Thus, Proposition~\ref{prop:main}
allows for improving estimates in this setting. 
\eremk
\end{remark}
We close this section by referring the reader to the monographs \cite{wendland05,buhmann03} as well
as \cite{johnson04a} for further details
on the approximation properties of radial basis functions, in particular, thin plate splines.
%------------------------
\section{Proof of Proposition~\ref{prop:main}}
%------------------------
%------------------------
\subsection{Tools}
%------------------------
The precise formulation of the minimization problem (\ref{eq:variational-formulation}) is based on the 
classical \emph{Beppo-Levi space} $\BLm$, which is defined as  
$$
\BLm:= \{u \in {\mathcal D}^\prime\,|\, \nabla^m u \in L^2({\mathbb R}^d)\}. 
$$
We refer to \cite{deny-lions54} and \cite[Sec.~{10.5}]{wendland05} for more properties of the space $\BLm$; 
in particular, 
$C^\infty_0({\mathbb R}^d)$ is dense in $\BLm$ (see \cite[Thm.~{10.40}]{wendland05} for the precise notion). 
We also need the \emph{minimum norm extension} $E^\Omega:H^m(\Omega) \rightarrow \BLm$ given by 
\begin{equation} 
\label{eq:minimal-norm-extension} 
E^\Omega U = \arg\min\{|u|_{H^m({\mathbb R}^d)}\,|\, u \in \BLm, \quad u|_{\Omega} = U\}. 
\end{equation} 
The minimization property in (\ref{eq:minimal-norm-extension}) implies the orthogonality 
\begin{equation}
\langle E^\Omega U, v\rangle_m = 0 \qquad \forall v \in \{v \in \BLm\,|\, v|_{\Omega} =0\}.
\end{equation}
The connection with elliptic PDE theory arises from the fact that 
$E^\Omega U$ satisfies an elliptic  PDE in $\Omegaext:={\mathbb R}^d \setminus \overline{\Omega}$:  
\begin{equation}
\Delta^m E^\Omega U = 0 \qquad \mbox{ in $\Omegaext$}. 
\end{equation}
It will be convenient to decompose $B(u,v):= \langle u,v\rangle_m= 
\sum_{|\alpha| = m} \frac{m!}{\alpha!} \int_{{\mathbb R}^d} D^\alpha u D^\alpha v 
$
as 
$
B(u,v)= B_\Omega(u,v) + B_{\Omegaext}(u,v)$, 
where  
$$
B_\Omega(u,v):= \sum_{|\alpha| = m} \frac{m!}{\alpha!} \int_\Omega D^\alpha u D^\alpha  v, 
\quad 
B_{\Omegaext}(u,v):= \sum_{|\alpha| = m} \frac{m!}{\alpha!} \int_{\Omegaext} D^\alpha u D^\alpha  v. 
$$
The trace mapping is continuous 
$H^{1/2+\varepsilon}(\Omega) \rightarrow H^{\varepsilon}(\partial\Omega)$ for $\varepsilon \in (0,1/2]$; 
however, the limiting case $\varepsilon = 0$ is not true; it is true if the Sobolev
space $H^{1/2}(\Omega)$ is replaced with the slightly smaller Besov space $B^{1/2}_{2,1}(\Omega)$: 
\begin{lemma}[Trace theorem]
\label{lemma:Besov-trace}
Let $\Omega \subset {\mathbb R}^d$ be a Lipschitz domain, $k \in{\mathbb N}_0$. Then there exists
$C > 0$ such that the multiplicative estimate 
$
\|u\|^2_{L^2(\partial\Omega)} \leq C \|u\|_{L^2(\Omega)} \|u\|_{H^1(\Omega)}$
holds as well as 
\begin{equation}
\label{eq:lemma:Besov-trace-10}
\|u\|_{L^2(\partial\Omega)} \leq C \|u\|_{B^{1/2}_{2,1}(\Omega)}, 
\qquad 
\|\nabla^k u\|_{L^2(\partial\Omega)} \leq C \|u\|_{B^{k+1/2}_{2,1}(\Omega)}. 
\end{equation}
\end{lemma}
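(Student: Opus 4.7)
The plan is to attack the three assertions in the order they are stated, with the first (multiplicative) estimate powering the remaining two via real interpolation.

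For the multiplicative inequality, I would exploit a $C^1$ vector field $b \colon \overline{\Omega} \to \mathbb{R}^d$ with $b \cdot n \geq c_0 > 0$ on $\partial\Omega$; such a $b$ exists for any Lipschitz domain, e.g.\ by patching approximate inward normals via a partition of unity. By density it suffices to argue for $u \in C^\infty(\overline{\Omega})$; applying the divergence theorem to $u^2 b$ yields
\begin{align*}
\int_{\partial\Omega} u^2 (b\cdot n)\, ds = \int_\Omega \bigl(2 u\, \nabla u\cdot b + u^2 \operatorname{div} b\bigr)\, dx.
\end{align*}
Bounding the left side from below by $c_0 \|u\|_{L^2(\partial\Omega)}^2$ and estimating the right side via Cauchy--Schwarz immediately produces $\|u\|_{L^2(\partial\Omega)}^2 \leq C \|u\|_{L^2(\Omega)} \|u\|_{H^1(\Omega)}$.

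For the second inequality I would invoke the identification $B^{1/2}_{2,1}(\Omega) = (L^2(\Omega), H^1(\Omega))_{1/2,1}$, valid for Lipschitz $\Omega$ thanks to a total extension operator bounded simultaneously on $L^2$ and on $H^1$, together with a standard $K$-method argument. Given $u$ in this interpolation space, pick the telescoping decomposition $u = \sum_{k\in\mathbb{Z}} u_k$ associated with near-optimal splittings for $K(2^k, u)$, so that $u_k \in H^1(\Omega)$ with $\|u_k\|_{L^2(\Omega)} \leq C\, K(2^k, u)$ and $\|u_k\|_{H^1(\Omega)} \leq C\, 2^{-k} K(2^k, u)$. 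Applying the multiplicative estimate to each $u_k$ and summing,
\begin{align*}
\|u\|_{L^2(\partial\Omega)} \leq \sum_k \|u_k\|_{L^2(\partial\Omega)} \leq C \sum_k \|u_k\|_{L^2(\Omega)}^{1/2} \|u_k\|_{H^1(\Omega)}^{1/2} \leq C \sum_k 2^{-k/2} K(2^k,u),
\end{align*}
and the last sum is equivalent to $\|u\|_{B^{1/2}_{2,1}(\Omega)}$ by the discrete $K$-method characterization.

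The higher-order bound $\|\nabla^k u\|_{L^2(\partial\Omega)} \leq C \|u\|_{B^{k+1/2}_{2,1}(\Omega)}$ then follows by applying the just-proved inequality to each partial derivative $D^\alpha u$, $|\alpha|=k$, together with the fact that differentiation of order $k$ is bounded $B^{k+1/2}_{2,1}(\Omega) \to B^{1/2}_{2,1}(\Omega)$ (a direct consequence of defining Besov spaces by real interpolation between integer-order Sobolev spaces). The main obstacle will be the middle step: one has to justify $(L^2(\Omega), H^1(\Omega))_{1/2,1} = B^{1/2}_{2,1}(\Omega)$ on a Lipschitz domain and construct the telescoping decomposition so that every piece actually lies in $H^1(\Omega)$ and the multiplicative estimate can be applied termwise; the first and third parts are then a short calculation and a formal reduction, respectively.
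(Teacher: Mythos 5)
Your argument is correct and takes a genuinely more self-contained route than the paper, which simply cites Triebel (Thm.\ 2.9.3, half-space case) and Adolphsson--Pipher (Lemma 1.10, Lipschitz generalization) for the $k=0$ Besov estimate and reduces $k\ge 1$ to $k=0$ exactly as you do. You instead derive everything from the multiplicative trace inequality, which you establish by the classical Rellich/Ne\v{c}as divergence-theorem trick with a vector field $b$ transversal to $\partial\Omega$, and then bootstrap to the Besov bound by a $K$-method telescoping decomposition and termwise application of the multiplicative estimate. This has the advantage of making the logical dependence transparent (the Besov trace estimate is a corollary of the elementary $L^2$--$H^1$ one rather than an independent import), at the cost of requiring the identification $(L^2(\Omega),H^1(\Omega))_{1/2,1}=B^{1/2}_{2,1}(\Omega)$ on Lipschitz domains, which as you note comes from a universal extension operator. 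Two small remarks: your vector field should be \emph{outward}-pointing (you want $b\cdot n\ge c_0>0$ with $n$ the outward normal), not built from inward normals as written; and to pass $\sum_k\operatorname{tr}(u_k)=\operatorname{tr}(u)$ rigorously one should either first prove the estimate for $u\in C^\infty(\overline\Omega)$ (where the telescoping series converges to $u$ in $H^1(\Omega)$ and the trace commutes with the sum) and then extend by density, or observe that the partial sums $u-a_{-M-1}-b_N$ converge to $u$ in $B^{1/2}_{2,1}(\Omega)$, which is precisely the continuity being established. Neither point is a real gap; both are the sort of care you flag yourself as ``the main obstacle.''
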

\begin{proof}
The case $k \ge 1$ in (\ref{eq:lemma:Besov-trace-10}) follows immediately from the case $k = 0$. The case $k = 0$ is 
discussed in \cite[Thm.~{2.9.3}]{triebel95} for the case of a half-space. The generalization
to Lipschitz domains can be found, for example, in \cite[Lemma~{1.10}]{adolphsson-pipher98}. 
\end{proof}
%------------------------
\subsection{An interpolation argument}  
%------------------------
The following technical result, which is of independent interest, will be used to reduce regularity assumptions
to $B^{m+1/2}_{2,1}(\Omega)$. 
\begin{lemma}
\label{lemma:functional-interpolation}
Let $X_1 \subset X_0$ be two Banach spaces with continuous embedding. Let  $q \in [1,\infty]$, $\theta \in (0,1)$. Define (by the real method
of interpolation) $X_\theta:= (X_0,X_1)_{\theta,q}$  for $\theta \in (0,1)$. 
Let $0 < \theta_1 < \theta_2 < \cdots < \theta_n < 1$ be fixed and assume that 
$l \in X_0^\prime$ satisfies for some $C_0$, $C_1$, $\varepsilon > 0$ 
\begin{eqnarray*}
|l(f)| &\leq & C_0 \|f\|_{X_0} \qquad \forall f \in X_0, \\
|l(f)| &\leq & C_1 \left[ \sum_{i=1}^n \varepsilon^{\theta_i} \|f\|_{X_{\theta_i}} + \varepsilon \|f\|_{X_1}\right] 
\qquad \forall f \in X_1. 
\end{eqnarray*}
Then there exists a constant $C > 0$ that is independent of $\varepsilon$ such that 
$$
|l(f)| \leq C \varepsilon^{\theta_1} \|f\|_{X_{\theta_1}} \qquad \forall f \in X_{\theta_1}. 
$$
\end{lemma}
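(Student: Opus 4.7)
My plan is to reduce the proof to a quasi-linear $K$-functional decomposition at scale $\varepsilon$, combined with the reiteration theorem of real interpolation. Given $f\in X_{\theta_1}$, I would first select a decomposition $f=g_0+g_1$ with $g_0\in X_0$ and $g_1\in X_1$ that simultaneously satisfies the three estimates $\|g_0\|_{X_0}\le C\varepsilon^{\theta_1}\|f\|_{X_{\theta_1}}$, $\|g_1\|_{X_1}\le C\varepsilon^{\theta_1-1}\|f\|_{X_{\theta_1}}$, and (crucially) $\|g_1\|_{X_{\theta_1}}\le C\|f\|_{X_{\theta_1}}$. The first two follow from $K(\varepsilon,f;X_0,X_1)\le C\varepsilon^{\theta_1}\|f\|_{X_{\theta_1}}$, which in turn is a consequence of the embedding $X_{\theta_1,q}\hookrightarrow X_{\theta_1,\infty}$. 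The third bound, boundedness of the ``smooth piece'' $g_1$ in the intermediate norm, requires a quasi-linear choice of decomposition, obtainable from the $J$-method representation $f=\int_0^\infty u(s)\,ds/s$ by a cutoff at $s=\varepsilon$ (a standard technical fact of real interpolation theory).

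Next I would apply the first assumption to $g_0$, giving $|l(g_0)|\le C_0\|g_0\|_{X_0}\le C\varepsilon^{\theta_1}\|f\|_{X_{\theta_1}}$, and apply the second assumption to $g_1\in X_1$. The heart of the argument is then the estimation of each intermediate term $\varepsilon^{\theta_i}\|g_1\|_{X_{\theta_i}}$ via the reiteration identity $X_{\theta_i}=(X_{\theta_1},X_1)_{\mu_i,q}$ with $\mu_i=(\theta_i-\theta_1)/(1-\theta_1)\in(0,1)$. Since
\[
\|g_1\|_{X_{\theta_i}}\le C\|g_1\|_{X_{\theta_1}}^{1-\mu_i}\|g_1\|_{X_1}^{\mu_i}\le C\varepsilon^{-(1-\theta_1)\mu_i}\|f\|_{X_{\theta_1}}
\]
by properties (ii) and (iii) of the decomposition, and since the arithmetic identity $\theta_i-(1-\theta_1)\mu_i=\theta_1$ holds by definition of $\mu_i$, one obtains $\varepsilon^{\theta_i}\|g_1\|_{X_{\theta_i}}\le C\varepsilon^{\theta_1}\|f\|_{X_{\theta_1}}$ for every $i$, while $\varepsilon\|g_1\|_{X_1}\le C\varepsilon^{\theta_1}\|f\|_{X_{\theta_1}}$ holds directly. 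Summing over $i$ and combining with the bound on $|l(g_0)|$ yields the desired estimate $|l(f)|\le C\varepsilon^{\theta_1}\|f\|_{X_{\theta_1}}$.

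The main technical obstacle is securing the quasi-linearity property $\|g_1\|_{X_{\theta_1}}\le C\|f\|_{X_{\theta_1}}$ simultaneously with the two $K$-functional bounds. Without it, the standard interpolation inequality $\|g_1\|_{X_{\theta_1}}\le C\|g_1\|_{X_0}^{1-\theta_1}\|g_1\|_{X_1}^{\theta_1}$ combined with the $K$-functional estimates on $\|g_1\|_{X_0},\|g_1\|_{X_1}$ produces only the suboptimal rate $\varepsilon^{\theta_1^2}$; the proof therefore depends essentially on selecting an atomic $J$-type decomposition whose cutoff is linear in $f$ and hence preserves intermediate regularity. The remaining ingredients---the exponent identity $\theta_i-(1-\theta_1)\mu_i=\theta_1$, the interpolation inequality from reiteration, and the summation over $i$---are mechanical.
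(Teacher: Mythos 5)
Your proof is correct and follows essentially the same route as the paper: both rest on a near-optimal $K$-functional decomposition of $f$ at scale $\varepsilon$ whose ``rough piece'' $g_0=f-\widetilde f$ also obeys $\|g_0\|_{X_{\theta_1}}\lesssim K(\varepsilon,f)$ (the paper cites Bramble--Scott, eqn.~(2.8), for precisely the quasi-linearity you obtain from the $J$-method cutoff), together with reiteration to identify $X_{\theta_i}$ as an intermediate space between $X_{\theta_1}$ and $X_1$. The only cosmetic difference is that the paper pushes each $\varepsilon^{\theta_i}\|\widetilde f\|_{X_{\theta_i}}$ through Young's inequality to reduce to the case $n=1$, whereas you bound these terms directly using your three-way control on $g_1$; the exponent bookkeeping and the underlying ingredients are identical.
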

\begin{proof}
We start with the special case $n = 1$ and we abbreviate $\theta = \theta_1$. 
Let $f \in X_\theta$. 
By definition of the $K$-functional we may choose $\widetilde f \in X_1$ with 
\begin{equation}
\label{eq:lemma:functional-interpolation-1}
\|f - \widetilde f\|_{X_0} + \varepsilon \|\widetilde f\|_{X_1} \leq 2 K(\varepsilon,f). 
\end{equation}
Using the linearity  of $l$, we can bound 
\begin{eqnarray*}
|l(f)|  &=& |l(f - \widetilde f) + l(\widetilde f)| \leq 
C_0 \|f - \widetilde f\|_{X_0} + 
C_1 \left[ \varepsilon^\theta \|\widetilde f\|_{X_\theta} + \varepsilon \|\widetilde f\|_{X_1} \right]
\stackrel{ (\ref{eq:lemma:functional-interpolation-1})}{ \leq } 
C K(\varepsilon,f) + \varepsilon^\theta \|\widetilde f\|_{X_\theta} \\
&\leq& C K(\varepsilon,f) + \varepsilon^\theta \|f - \widetilde f\|_{X_\theta} + 
\varepsilon^\theta\|f\|_{X_\theta}. 
\end{eqnarray*}
We now use the bound $\|f - \widetilde f\|_{X_\theta} \leq 3 K(\varepsilon,f)$ from
\cite[eqn.~(2.8)]{bramble-scott78} and then 
$K(\varepsilon,f) \leq C \varepsilon^\theta \|f\|_{X_\theta}$ 
(see, e.g., \cite[Thm.~{1.3.3}]{triebel95}) to conclude  
$$
|l(f)| \leq C \varepsilon^\theta \|f\|_{X_\theta}. 
$$
We now consider the general case $n > 1$. We choose $\widetilde f$ as in 
(\ref{eq:lemma:functional-interpolation-1}) and proceed as above to get 
\begin{align}
\label{eq:lemma:functional-interpolation-2}
|l(f)|  & = |l(f - \widetilde f) + l(\widetilde f)| \leq 
C_0 \|f - \widetilde f\|_{X_0} + C_1\!\! \left[ \varepsilon^{\theta_1} \|\widetilde f\|_{X_{\theta_1}} + 
 \sum_{i=2}^n \varepsilon^{\theta_i} \|\widetilde f\|_{X_{\theta_i}} + 
 \varepsilon\|\widetilde f\|_{X_1}\right].
\end{align}
In order to treat the terms involving $\|\widetilde f\|_{X_{\theta_i}}$ for $i \ge 2$, we 
use the reiteration theorem to infer $X_{\theta_i} = (X_{\theta_1},X_{1})_{s_i,q}$, where 
$s_i \in (0,1)$ is given by 
$$
\theta_i = \theta_1 (1 - s_i) + s_i.  
$$
Next, the interpolation inequality 
$\|\widetilde f\|_{X_{\theta_i}}
\leq C\|\widetilde f\|_{X_{\theta_1}}^{1-s_i}\|\widetilde f\|_{X_1}^{s_i}$ 
together with the elementary bound $ab \leq a^p + b^q$ ($a$, $b > 0$, $p$, $q > 1$ with $1/p+1/q = 1$)
gives 
\begin{align*}
\varepsilon^{\theta_i} \|\widetilde f\|_{X_{\theta_i}} & \leq C 
\varepsilon^{\theta_i  - s_i} \|\widetilde f\|_{X_{\theta_1}}^{1-s_i} \, \varepsilon^{s_i}\|\widetilde f\|_{X_1}^{s_i} 
\leq C \left[ 
\varepsilon^{(\theta_i - s_i)/(1 - s_i)} \|\widetilde f\|_{X_{\theta_1}} + 
\varepsilon \|\widetilde f\|_{X_1} 
\right] \\
& = C \left[ 
\varepsilon^{\theta_1} \|\widetilde f\|_{X_{\theta_1}} + 
\varepsilon \|\widetilde f\|_{X_1} 
\right].
\end{align*}
Inserting this result in (\ref{eq:lemma:functional-interpolation-2}), we get together with 
(\ref{eq:lemma:functional-interpolation-1})
$$
|l(f)| \leq C \left[ K(\varepsilon,f) + \varepsilon^{\theta_1} \|\widetilde f\|_{X_{\theta_1}}\right]. 
$$
Reasoning as in the case $n=1$ now allows us to conclude the argument. 
\end{proof}
%--------------------------
\subsection{Elliptic regularity}
%--------------------------
\begin{lemma}
\label{lemma:ADN}
Let $\Omega \subset {\mathbb R}^d$ be a bounded Lipschitz domain with a \emph{smooth} boundary. 
Let $m \in {\mathbb N}$ and $k \in {\mathbb N}_0$. Then there is $C_{\Omega,m,k}$ depending only
on $\Omega$, $m$, $k$ such that the following is true: If $g \in H^{-m+k}(\Omega)$ and $u$ is the (variational)
solution of the Dirichlet problem 
$$
\Delta^m u = g 
\quad \mbox{ in $\Omega$}, 
\qquad u = \partial_n u = \cdots \partial_n^{m-1} u = 0 
\quad \mbox{ on $\partial\Omega$}, 
$$
then $u \in H^{m+k}(\Omega)$ with the {\sl a priori} bound 
$$
\|u\|_{H^{m+k}(\Omega)} \leq C_{\Omega,m,k} \|g\|_{H^{-m+k}(\Omega)}. 
$$
\end{lemma}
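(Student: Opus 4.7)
The plan is to separate the argument into the base case $k = 0$, which is handled by the Lax--Milgram lemma, and the case $k \geq 1$, which relies on the classical shift theorem for elliptic boundary value problems on smooth domains.

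For the case $k = 0$, I would formulate the problem variationally: find $u \in H^m_0(\Omega)$ such that $B_\Omega(u,v) = \langle g,v\rangle$ for all $v \in H^m_0(\Omega)$, where the right-hand side is the duality pairing between $H^{-m}(\Omega)$ and $H^m_0(\Omega)$. The Dirichlet conditions $u = \partial_n u = \cdots = \partial_n^{m-1}u = 0$ are encoded in $H^m_0(\Omega)$. Continuity of $B_\Omega$ is immediate from its definition, while coercivity on $H^m_0(\Omega)$ follows from the Poincar\'e inequality on the bounded set $\Omega$, which ensures that the seminorm $|\cdot|_{H^m(\Omega)}$ (and hence $B_\Omega(\cdot,\cdot)^{1/2}$) is an equivalent norm on $H^m_0(\Omega)$. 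The Lax--Milgram lemma then produces a unique $u \in H^m_0(\Omega)$ with $\|u\|_{H^m(\Omega)} \leq C \|g\|_{H^{-m}(\Omega)}$.

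For $k \geq 1$, the key observation is that the Dirichlet problem for $\Delta^m$ is a properly elliptic boundary value problem in the sense of Agmon--Douglis--Nirenberg: the principal symbol $|\xi|^{2m}$ is elliptic, and the normal-derivative Dirichlet system $\{u, \partial_n u, \ldots, \partial_n^{m-1} u\}$ on $\partial\Omega$ satisfies the Lopatinskii--Shapiro complementing condition (a classical verification). Under the assumption that $\partial\Omega$ is smooth, the ADN shift theorem then asserts that the (unique) weak solution $u \in H^m_0(\Omega)$ constructed above in fact lies in $H^{m+k}(\Omega)$ whenever $g \in H^{-m+k}(\Omega)$, together with the a priori estimate $\|u\|_{H^{m+k}(\Omega)} \leq C_{\Omega,m,k} \|g\|_{H^{-m+k}(\Omega)}$. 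I would cite a standard reference for this shift theorem (e.g., Agmon--Douglis--Nirenberg, Lions--Magenes, or Gazzola--Grunau--Sweers for the specific polyharmonic case).

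In truth the statement presents no real obstacle: both ingredients are classical, and the only care required is to identify the variational solution produced by Lax--Milgram with the one covered by the ADN regularity theory so that the same $u$ can be used for both halves of the argument. The smoothness hypothesis on $\partial\Omega$ is used solely in the second step; the argument for $k = 0$ would go through for any bounded Lipschitz domain.
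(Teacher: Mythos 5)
Your proposal is correct and takes essentially the same route as the paper: the paper's proof consists entirely of citing the Agmon--Douglis--Nirenberg regularity theory (plus Wloka and Lions--Magenes as self-contained references), which is precisely the shift theorem you invoke for $k \geq 1$. You add a bit more useful detail---spelling out the $k=0$ base case via Lax--Milgram/Poincar\'e and noting explicitly that the Dirichlet system for $\Delta^m$ satisfies the complementing condition---but this is an elaboration of, not a departure from, the paper's argument.
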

\begin{proof}
This regularity result is a special case of a more general result for the regularity of 
solutions of elliptic systems, \cite{adn59,adn64}. Self-contained proofs of this result can 
also be found, for example, in \cite[Sec.~{20}]{wloka82} and in 
\cite[Chap.~2, Thm.~{8.2}]{lions-magenes72}. 
\end{proof}
The minimum norm extension $E^\Omega:H^m(\Omega)\rightarrow \BLm$ satisfies 
\begin{equation}
\label{eq:minimum-norm-extension-bound}
|E^\Omega f|_{H^m({\mathbb R}^d)} \leq C_\Omega \|f\|_{H^m(\Omega)}. 
\end{equation}
However, for smooth $\partial\Omega$, it has additional mapping properties: 
\begin{corollary}
\label{cor:shift-theorem}
Let $\Omega$ be a bounded Lipschitz domain with a smooth boundary and let $\overline{\Omega}$ be contained in the (open) ball $B_R(0)$ 
of radius $R$ centered at $0$. 
For each $j \in \{0,\ldots,m\}$ there is a constant $C_{j,\Omega} > 0$ depending only
on $j$, $\Omega$, and $R$ such that the following is true for the minimum norm extension 
$E^\Omega: H^m(\Omega) \rightarrow \BLm$: It is also  
a bounded linear map $H^{m+j}(\Omega) \rightarrow H^{m+j}(B_R(0)\setminus\overline{\Omega})$ and, 
with $\gamma_0^c$ denoting the trace operator for $B_R(0)\setminus\overline{\Omega}$,
\begin{align}
\label{eq:cor:shift-theorem-10}
\|\gamma_0^c(\nabla^{m+j} E^\Omega f)\|_{L^2(\partial\Omega)} \leq 
C_{j,\Omega} \|f\|_{B^{m+j+1/2}_{2,1}(\Omega)},
\end{align}
\end{corollary}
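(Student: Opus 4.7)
My plan is to establish the two assertions separately: first, the integer-index shift $E^\Omega\colon H^{m+j}(\Omega)\to H^{m+j}(B_R(0)\setminus\overline{\Omega})$; then the trace estimate (\ref{eq:cor:shift-theorem-10}) by real interpolation between two consecutive integer shifts combined with the Besov trace theorem (Lemma~\ref{lemma:Besov-trace}). Write $\Omega^\prime:=B_R(0)\setminus\overline{\Omega}$ and $u:=E^\Omega f$. The structural facts I shall use are that $\Delta^m u=0$ in $\Omegaext$ (so $u$ is polyharmonic in $\Omega^\prime$ and $C^\infty$ in its interior with standard local bounds), that the interior and exterior traces of $u$ on $\partial\Omega$ coincide (both equal to $\gamma_\ell f$ for $\ell=0,\ldots,m-1$) since $u\in\BLm$, and that $\|u\|_{H^m(\Omega^\prime)}\le C\|f\|_{H^m(\Omega)}$ follows from (\ref{eq:minimum-norm-extension-bound}) together with a Poincar\'e-type inequality on the bounded domain $\Omega^\prime$.

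\textbf{Part A (integer shift).} The strategy is to bring Lemma~\ref{lemma:ADN} (which accommodates only homogeneous Dirichlet data) to bear on the smooth domain $\Omega^\prime$ after two reductions. First, to remove the boundary data on $\partial B_R(0)$, I would pick $\chi\in C_c^\infty({\mathbb R}^d)$ with $\chi\equiv 1$ near $\overline{\Omega}$ and $\chi\equiv 0$ near $\partial B_R(0)$, and set $u^\sharp:=\chi u$. Then $u^\sharp$ carries the same $(m-1)$-jet as $u$ on $\partial\Omega$, vanishes identically near $\partial B_R(0)$, and solves $\Delta^m u^\sharp=[\Delta^m,\chi]u=:g_\chi$; since $\operatorname{supp}\nabla\chi\Subset\Omegaext$ and $u$ is $C^\infty$ and polyharmonic there, $\|g_\chi\|_{H^s(\Omega^\prime)}\le C_s\|f\|_{H^m(\Omega)}$ for every $s$. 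Second, to remove the boundary data on $\partial\Omega$, I would use the smoothness of $\partial\Omega$ to pick an extension $\tilde f\in H^{m+j}({\mathbb R}^d)$ of $f$ with $\operatorname{supp}\tilde f\Subset B_R(0)$ and $\|\tilde f\|_{H^{m+j}({\mathbb R}^d)}\le C\|f\|_{H^{m+j}(\Omega)}$. The function $w:=u^\sharp-\tilde f$ then has vanishing $(m-1)$-jet on $\partial\Omega^\prime=\partial\Omega\cup\partial B_R(0)$ and satisfies $\Delta^m w=g_\chi-\Delta^m\tilde f$ with $H^{-m+j}(\Omega^\prime)$-norm controlled by $C\|f\|_{H^{m+j}(\Omega)}$. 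Lemma~\ref{lemma:ADN} applied on the smooth domain $\Omega^\prime$ yields $\|w\|_{H^{m+j}(\Omega^\prime)}\le C\|f\|_{H^{m+j}(\Omega)}$, and re-assembling $u=w+\tilde f+(1-\chi)u$ on $\Omega^\prime$ concludes Part~A, as $(1-\chi)u$ is controlled in every Sobolev norm on $\Omega^\prime$ by $\|f\|_{H^m(\Omega)}$ via interior polyharmonic regularity.

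\textbf{Part B (trace estimate).} The argument in Part~A carries over verbatim to any non-negative integer, so $E^\Omega$ is bounded from $H^{m+j}(\Omega)$ into $H^{m+j}(\Omega^\prime)$ and from $H^{m+j+1}(\Omega)$ into $H^{m+j+1}(\Omega^\prime)$. Real interpolation with $(\theta,q)=(1/2,1)$, using $(H^s,H^{s+1})_{1/2,1}=B^{s+1/2}_{2,1}$, then gives $E^\Omega\colon B^{m+j+1/2}_{2,1}(\Omega)\to B^{m+j+1/2}_{2,1}(\Omega^\prime)$. Finally, Lemma~\ref{lemma:Besov-trace} applied on $\Omega^\prime$ with $k=m+j$, restricting the $L^2$-trace from $\partial\Omega^\prime$ to $\partial\Omega$, yields (\ref{eq:cor:shift-theorem-10}).

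\textbf{Main obstacle.} The heart of the matter is Part~A: since Lemma~\ref{lemma:ADN} accommodates only homogeneous Dirichlet data, the two components of $\partial\Omega^\prime$ must be peeled off separately. The cutoff $\chi$ handles $\partial B_R(0)$ at the cost of a commutator term that is harmless because $u$ is polyharmonic, hence smooth, away from $\partial\Omega$; the Sobolev extension of $f$ handles $\partial\Omega$ at the cost of an $H^{-m+j}$ source term. The smoothness of $\partial\Omega$ enters twice, namely to supply such an $H^{m+j}$-extension and to legitimise Lemma~\ref{lemma:ADN} on $\Omega^\prime$.
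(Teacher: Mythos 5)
Your proof is correct and follows the same basic strategy as the paper: use the Stein extension together with the elliptic shift theorem (Lemma~\ref{lemma:ADN}) on the bounded smooth annulus $\Omega'=B_R(0)\setminus\overline\Omega$ to get integer-index mapping properties of $E^\Omega$, then upgrade to the Besov trace bound by real interpolation and Lemma~\ref{lemma:Besov-trace}. Two differences are worth recording. First, you prove the shift $H^{m+j}(\Omega)\to H^{m+j}(\Omega')$ at every integer level $j$ and then interpolate between consecutive pairs with $(\theta,q)=(1/2,1)$, whereas the paper proves only the two endpoints $j=0$ and $j=m$ (i.e.\ $H^m$ and $H^{2m}$) and interpolates once with the appropriate $\theta_j$; both variants deliver the same Besov estimates. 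Second, and more substantively, you insert a cutoff $\chi$ ($\chi\equiv 1$ near $\overline\Omega$, $\chi\equiv 0$ near $\partial B_R(0)$) before invoking Lemma~\ref{lemma:ADN}, so that the function fed into the Dirichlet problem genuinely has homogeneous data on the \emph{outer} boundary component $\partial B_R(0)$, at the cost of a commutator $[\Delta^m,\chi]u$ that is harmless because $u=E^\Omega f$ is polyharmonic, hence smooth with interior estimates, on $\operatorname*{supp}\nabla\chi\Subset\Omegaext$. The paper instead writes $E^\Omega f=Ef+u$ and declares $u$ to have homogeneous Dirichlet data on all of $\partial\Omega'$, but since $E^\Omega f$ need not vanish near $\partial B_R(0)$, the function $u=E^\Omega f-Ef$ does not in fact have zero data there. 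Your cutoff step is therefore not mere pedantry: it repairs an imprecision in the paper's decomposition, and is the correct way to reduce to Lemma~\ref{lemma:ADN}. The rest of your argument (the Poincar\'e step to pass from the $\BLm$-seminorm bound to full $H^m(\Omega')$ control, the choice of a compactly supported $H^{m+j}$-extension $\tilde f$, the reassembly $u=w+\tilde f+(1-\chi)u$, and the interpolation/trace step) is sound.
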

\begin{proof}
We write $\widetilde \Omega:= B_R(0)\setminus\overline{\Omega}$. The operator $E^\Omega$ is clearly
a bounded linear map $E^\Omega:H^m(\Omega) \rightarrow H^m(\widetilde \Omega)$. From
Lemma~\ref{lemma:ADN}, we also see that $E^\Omega$ maps $H^{2m}(\Omega)$ boundedly into
$H^{2m}(\widetilde \Omega)$: We denote by $E$ the universal extension operator 
of \cite[Chap.~{VI}, 3]{stein70}, which we may choose such that $\operatorname*{supp} Ef \subset B_R(0)$. Next, 
we write $E^\Omega f$ in the form $E^\Omega f = E f + u$, where $Ef \in H^{2m}(\widetilde\Omega)$ 
(since $f \in H^{2m}(\Omega)$) and $u$ solves the differential equation 
$$
\Delta^m u = -\Delta^m Ef \in L^2(\widetilde\Omega) \quad \mbox{ in $\widetilde\Omega$}, 
\qquad u = \partial_n u = \cdots = \partial_n^{m-1} u = 0 \qquad \mbox{ on $\partial\widetilde \Omega$}.
$$
Lemma~\ref{lemma:ADN} then gives $u \in H^{2m}(\widetilde\Omega)$ with the 
{\sl a priori} estimate $\|u\|_{H^{2m}(\widetilde\Omega)} \leq C \|\Delta^m Ef\|_{L^2(\widetilde\Omega)}
\leq C \|Ef\|_{H^{2m}(\widetilde\Omega)} \leq C \|f\|_{H^{2m}(\Omega)}$. We have thus obtained
\begin{equation}
\label{eq:cor:shift-theorem-1}
\|E^\Omega f\|_{H^m(\widetilde\Omega)} \leq C \|f\|_{H^m(\Omega)}, 
\qquad 
\|E^\Omega f\|_{H^{2m}(\widetilde\Omega)} \leq C \|f\|_{H^{2m}(\Omega)}.
\end{equation}
An interpolation argument then gives us 
$$
\|E^\Omega f\|_{B^{m+1/2+j}_{2,1}(\widetilde\Omega)} \leq C \|f\|_{B^{m+j+1/2}_{2,1}(\Omega)}, 
\qquad j=0,\ldots,m-1. 
$$
By the trace theorem (Lemma~\ref{lemma:Besov-trace}), we arrive at  
$\|\nabla^{j+m} E^\Omega f\|_{L^2(\partial\Omega)} \leq C \|f\|_{B^{m+j+1/2}_{2,1}(\Omega)}$
for $j=0,\ldots,m-1$. 
\end{proof}
%\begin{remark}
%An alternative proof of Corollary~\ref{cor:shift-theorem} could be devised in terms of 
%boundary integral operators that realize the ``Dirichlet-to-Neumann maps''. In fact, the 
%procedure presented here may be viewed as a proof of the shift theorem for this 
%Dirichlet-to-Neumann map. 
%\end{remark}
%------------------------
\subsection{PDE-based proof of Proposition~\ref{prop:main}}
%------------------------
\begin{lemma} 
\label{lemma:norm-EOmega}
Let $\Omega$ be a Lipschitz domain. Then 
$$
|E^\Omega f - I f|_{m} \leq C_\Omega |f - If|_{H^m(\Omega)}. 
$$
\end{lemma}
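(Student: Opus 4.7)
The plan is to reduce the claim to the \emph{full-norm} a priori bound (\ref{eq:minimum-norm-extension-bound}) via two observations: first, by linearity of the minimum-norm extension, $E^\Omega f - If$ is itself the minimum-norm extension of $(f-If)|_\Omega$; and second, (\ref{eq:minimum-norm-extension-bound}) can be upgraded from the full $H^m$-norm to the seminorm by a Bramble--Hilbert/Deny--Lions argument.

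\textbf{Identifying $If$ as a minimum-norm extension.} I would first show that $If = E^\Omega(If|_\Omega)$. Since $If$ minimizes $|u|_m$ over $\{u \in \BLm\,|\, u(x_i) = f(x_i),\ i=1,\dots,N\}$, the Euler--Lagrange condition yields the orthogonality $\langle If, v\rangle_m = 0$ for every $v\in\BLm$ that vanishes at all $x_i$. Because the $x_i$ lie in $\Omega$, any $v\in\BLm$ with $v|_\Omega = 0$ a fortiori vanishes at each $x_i$, so $If$ also satisfies the orthogonality characterizing $E^\Omega(If|_\Omega)$; combined with $(If)|_\Omega = If|_\Omega$ and the uniqueness of the minimum-norm extension (established from the parallelogram law and the fact that a polynomial in $\mathbb{P}_{m-1}$ vanishing on $\Omega$ is zero), this yields $If = E^\Omega(If|_\Omega)$. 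The map $E^\Omega$ is linear (again by uniqueness), so setting $g := (f-If)|_\Omega$ gives
$$
E^\Omega f - If = E^\Omega(f - If|_\Omega) = E^\Omega g,
\qquad
|E^\Omega f - If|_m = |E^\Omega g|_m.
$$

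\textbf{Sharpening (\ref{eq:minimum-norm-extension-bound}) to the seminorm.} For any polynomial $p \in \mathbb{P}_{m-1}$, the function $p$ is itself an admissible extension with vanishing seminorm, so by uniqueness $E^\Omega p = p$ and therefore $E^\Omega(g+p) = E^\Omega g + p$, which implies $|E^\Omega g|_m = |E^\Omega(g+p)|_m$. Applying (\ref{eq:minimum-norm-extension-bound}) to $g+p$ then yields
$$
|E^\Omega g|_m \leq C_\Omega \|g+p\|_{H^m(\Omega)} \qquad \forall p \in \mathbb{P}_{m-1}.
$$
The Deny--Lions/Bramble--Hilbert lemma on the Lipschitz domain $\Omega$ furnishes a $p$ with $\|g+p\|_{H^m(\Omega)} \leq C\,|g|_{H^m(\Omega)}$, giving
$$
|E^\Omega f - If|_m = |E^\Omega g|_m \leq C\,|g|_{H^m(\Omega)} = C\,|f - If|_{H^m(\Omega)},
$$
as required.

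The one subtle step is the identification $If = E^\Omega(If|_\Omega)$: one must notice that the variational characterization defining $If$ \emph{already contains} the weaker orthogonality defining the minimum-norm extension of its own trace on $\Omega$. Once this is in hand, the proof is just linearity and uniqueness of $E^\Omega$ combined with a standard Bramble--Hilbert argument to convert the full-norm bound (\ref{eq:minimum-norm-extension-bound}) into the seminorm bound stated in the lemma.
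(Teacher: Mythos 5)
Your proof is correct, and it takes a genuinely different route from the paper's. The paper splits the seminorm as $B = B_\Omega + B_{\Omega^c}$, writes $E^\Omega f - If = E(f-If) + \delta$ with $E$ the Stein extension and $\delta|_\Omega = 0$, kills the $\delta$-contribution over $\Omega^c$ by integration by parts together with $\Delta^m(E^\Omega f - If)=0$ there, and bounds the remaining cross term by the continuity of $E$; the reduction from full norm to seminorm via Deny--Lions is then as in your argument. You instead observe that $If$ is itself the minimum-norm extension of its own trace on $\Omega$: since $If$ minimizes $|\cdot|_m$ over $\{u\in\BLm : u(x_i)=f(x_i)\}$, which contains the smaller set $\{u\in\BLm : u|_\Omega=If|_\Omega\}$, and $If$ belongs to that smaller set, uniqueness already gives $If = E^\Omega(If|_\Omega)$ (your orthogonality phrasing is an equivalent way to see this). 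Linearity of $E^\Omega$ then reduces the lemma to the stability bound (\ref{eq:minimum-norm-extension-bound}) applied to $(f-If)|_\Omega$, after which $E^\Omega p = p$ for $p\in{\mathbb P}_{m-1}$ plus Deny--Lions yields the seminorm form. Your route avoids both the Stein extension and the integration-by-parts/PDE step, replacing them with structural properties of $E^\Omega$ (linearity, uniqueness, polynomial reproduction), which is arguably cleaner; the paper's version sits more naturally alongside the PDE-based machinery used elsewhere in the proof of Proposition~\ref{prop:main}.
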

\begin{proof} 
We exploit that $\Delta^m ( E^\Omega f - I f) = 0$ in ${\Omegaext}$. To that end, let again $E$ be the 
universal extension of operator 
of \cite[Chap.~{VI}, 3]{stein70}. We write $E^\Omega f - I f = E (f - If) + \delta$ for some 
$\delta \in \BLm$ with $\delta|_{\Omega} = 0$. We get
\begin{align*}
|E^\Omega f - If|^2_m & = B_{\Omega}(f - If, f - If) + B_{\Omegaext}(E^\Omega f - If, E(f - If) + \delta) \\
 & = |f - If|^2_{H^m(\Omega)}  + B_{\Omegaext}(E^\Omega f - If, E(f - If)),  
\end{align*}
where we used integration by parts and that $\delta|_{\Omega} \equiv 0$; 
the integration by parts does not produce any terms ``at infinity'' since $C^\infty_0({\mathbb R}^d)$ is 
dense in $\BLm$ (in the sense described in \cite[Thm.~{10.40}]{wendland05}) and thus $\delta$ 
can be approximated by such compactly supported functions. The continuity of $E$ implies 
$$
|E^\Omega f - If |_m \leq C_\Omega \|f - If\|_{H^m(\Omega)}, 
$$
and the reduction  to a seminorm follows from the Deny-Lions Lemma and fact that $I$ reproduces 
polynomials of degree $m-1$.  
\end{proof}
The solution $If$ of the minimization problem (\ref{eq:variational-formulation}) satisfies the
orthogonality condition 
\begin{equation}
\label{eq:orthogonality}
\langle E^\Omega f - If,If \rangle_{m} = 0 
\end{equation}
since $E^\Omega f - If \in \BLm$ and $(E^\Omega f - If)(x_i) = f(x_i) - If(x_i) = 0$, $i =1,\ldots,N$. 
Therefore, 
\begin{align}
\nonumber 
\langle E^\Omega f - If,E^\Omega f - If\rangle_m &= 
\langle E^\Omega f - If,E^\Omega f \rangle_m \\ 
\label{eq:nuce-0}
& = B_\Omega(f - If,f) + 
B_{\Omegaext}(E^\Omega f - If,E^\Omega f). 
\end{align}
These last two terms are treated separately in Lemmas~\ref{lemma:domain-part}, \ref{lemma:exterior-domain-part}. 
Inserting (\ref{eq:lemma:domain-part-10}),  (\ref{eq:lemma:exterior-domain-part-10}) 
in (\ref{eq:nuce-0}) we get 
$$
|E^\Omega f - If|^2_{H^m({\mathbb R}^d)} \leq C h^{1/2} \|f\|_{B^{m+1/2}_{2,1}(\Omega)} |f - If|_{H^m(\Omega)}, 
$$
which readily implies (\ref{eq:prop:main-1}) of
Proposition~\ref{prop:main}. The bound (\ref{eq:prop:main-2}) follows from 
(\ref{eq:prop:main-1}) and an interpolation argument 
since the reiteration theorem asserts 
for $0 < \delta < 1/2$ that 
$H^{m+\delta}(\Omega) = (H^m(\Omega),B^{m+1/2}_{2,1}(\Omega))_{2 \delta,2}$
and $|E^\Omega f - If|_{H^m({\mathbb R}^d)} \leq C \|f\|_{H^m(\Omega)}$, which follows from 
combining (\ref{eq:orthogonality}) and (\ref{eq:minimum-norm-extension-bound}). 
%\begin{remark}
%The arguments of Sections~\ref{sec:domain_part}, \ref{sec:exterior_domain_part} employ
%interpolation arguments. This is merely a technical device to reduce to the regularity requirement
%of $f$ from $H^{2m}(\Omega)$ to $B^{m+1/2}_{2,1}(\Omega)$. For $f \in H^{2m}(\Omega)$, 
%the arguments of Sections~\ref{sec:domain_part}, \ref{sec:exterior_domain_part} are ``elementary'' 
%in the sense that only integration by parts, elliptic regularity, and the 
%properties of polyharmonic spline interpolation of \cite{duchon78} are required. 
%\end{remark}
%------------------------
%\subsubsection{Estimating $B_\Omega(f - If,f)$}
%------------------------
\begin{lemma} 
\label{lemma:domain-part}
Let $\Omega$ be a Lipschitz domain. Then: 
\begin{align}
\label{eq:lemma:domain-part-10}
|B_\Omega(f - If,f)| \leq C_\Omega h^{1/2} |f - If|_{H^m(\Omega)} \|f\|_{B^{m+1/2}_{2,1}(\Omega)}. 
\end{align}
\end{lemma}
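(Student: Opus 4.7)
The plan is to view the bilinear expression on the left as a linear functional in its second argument and to invoke Lemma~\ref{lemma:functional-interpolation} in order to reduce the smoothness requirement on $f$ to $B^{m+1/2}_{2,1}(\Omega)$. Fix $u := f - If \in H^m(\Omega)$ and define $l(g) := B_\Omega(u,g)$. Cauchy--Schwarz immediately yields the crude bound
\begin{equation*}
|l(g)| \leq C\,|u|_{H^m(\Omega)}\,\|g\|_{H^m(\Omega)} \qquad \forall\, g\in H^m(\Omega),
\end{equation*}
which will play the role of the $X_0$-estimate in Lemma~\ref{lemma:functional-interpolation} with $X_0 = H^m(\Omega)$.

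To obtain a refined estimate for smooth $g \in H^{2m}(\Omega) =: X_1$, integrate by parts $m$ times to transfer all derivatives off $u$. Using $\sum_{|\alpha|=m}\tfrac{m!}{\alpha!}D^{2\alpha} = \Delta^m$, one gets
\begin{equation*}
B_\Omega(u,g) \,=\, (-1)^m\!\int_\Omega u\,\Delta^m g\,dx \;+\; \sum_{k}\int_{\partial\Omega}(D^{\beta_k}u)(D^{\gamma_k}g)\,c_k\,ds,
\end{equation*}
where each boundary integral satisfies $|\beta_k|+|\gamma_k| = 2m-1$, $|\beta_k|\leq m-1$ (hence $|\gamma_k|\geq m$), and $c_k$ is a smooth product of normal components. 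The interior contribution is bounded by $\|u\|_{L^2(\Omega)}\|\Delta^m g\|_{L^2(\Omega)} \leq C h^m|u|_{H^m(\Omega)}\|g\|_{H^{2m}(\Omega)}$, where the $h^m$ factor comes from Corollary~\ref{cor:duchon} applied to $u = f-If$. For a boundary term with $|\beta_k| = m-1-j$, the multiplicative trace from Lemma~\ref{lemma:Besov-trace} combined with Corollary~\ref{cor:duchon} (the latter supplying $\|\nabla^k u\|_{L^2(\Omega)} \leq C h^{m-k}|u|_{H^m(\Omega)}$ for $0\leq k\leq m$) yields
\begin{equation*}
\|\nabla^{m-1-j}u\|_{L^2(\partial\Omega)}^2 \leq C\|\nabla^{m-1-j}u\|_{L^2(\Omega)}\|\nabla^{m-1-j}u\|_{H^1(\Omega)} \leq C h^{2j+1}|u|_{H^m(\Omega)}^2,
\end{equation*}
while the Besov trace of Lemma~\ref{lemma:Besov-trace} gives $\|\nabla^{m+j}g\|_{L^2(\partial\Omega)}\leq C\|g\|_{B^{m+j+1/2}_{2,1}(\Omega)}$. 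Summing the boundary contributions by $j = 0,\ldots,m-1$ produces
\begin{equation*}
|l(g)| \leq C\,|u|_{H^m(\Omega)}\Bigl[\,h^m\|g\|_{H^{2m}(\Omega)} \;+\; \sum_{j=0}^{m-1} h^{j+1/2}\|g\|_{B^{m+j+1/2}_{2,1}(\Omega)}\Bigr].
\end{equation*}

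This is precisely the hypothesis of Lemma~\ref{lemma:functional-interpolation} with $X_0 = H^m(\Omega)$, $X_1 = H^{2m}(\Omega)$, $q = 1$, $\varepsilon = h^m$, and $\theta_i = (2i-1)/(2m)$ for $i = 1,\ldots,m$; indeed $\varepsilon^{\theta_i} = h^{i-1/2}$, and by the reiteration theorem $X_{\theta_i} = (H^m,H^{2m})_{\theta_i,1} = B^{m+i-1/2}_{2,1}(\Omega)$, matching the terms above in one-to-one fashion. The conclusion of Lemma~\ref{lemma:functional-interpolation} then reads
\begin{equation*}
|l(g)| \leq C\,h^{1/2}\,|u|_{H^m(\Omega)}\,\|g\|_{B^{m+1/2}_{2,1}(\Omega)} \qquad \forall\, g\in B^{m+1/2}_{2,1}(\Omega),
\end{equation*}
and specializing to $g = f$ gives the stated bound. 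I expect the main obstacle to be the integration-by-parts bookkeeping in the middle step: one must verify that every boundary term genuinely pairs a $u$-derivative of order $\leq m-1$ (where the multiplicative trace combined with Corollary~\ref{cor:duchon} yields the crucial $h^{j+1/2}$ factor) with a $g$-derivative of order $\geq m$ (so that Lemma~\ref{lemma:Besov-trace} controls its trace by a Besov norm of $g$). Once the powers of $h$ and the Besov indices are arranged to fit the template of Lemma~\ref{lemma:functional-interpolation}, the rest is routine.
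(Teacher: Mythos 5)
Your argument is correct, but it takes a genuinely different and more elaborate route than the paper's. The paper integrates by parts only \emph{once}, producing a single boundary term pairing $\nabla^{m-1}(f-If)$ with $\nabla^m\widetilde f$ and a single interior term pairing $\nabla^{m-1}(f-If)$ with $\nabla^{m+1}\widetilde f$. Since Corollary~\ref{cor:duchon} with $k=m-1$ already gives $\|\nabla^{m-1}(f-If)\|_{L^2(\Omega)}\lesssim h\,|f-If|_{H^m(\Omega)}$ (and hence a factor $h^{1/2}$ via the multiplicative trace), this immediately yields the two-term bound $|B_\Omega(f-If,\widetilde f)|\lesssim|f-If|_{H^m(\Omega)}\bigl[h^{1/2}\|\widetilde f\|_{B^{m+1/2}_{2,1}(\Omega)}+h\|\widetilde f\|_{H^{m+1}(\Omega)}\bigr]$ for $\widetilde f\in H^{m+1}(\Omega)$, so one only needs the simple case $n=1$ of Lemma~\ref{lemma:functional-interpolation}, with $X_1=H^{m+1}(\Omega)$, $\theta_1=1/2$, $\varepsilon=h$. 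You instead integrate by parts $m$ times, require $g\in H^{2m}(\Omega)$ so that $\Delta^m g$ makes sense, and invoke the full $n=m$ version of the interpolation lemma; your bookkeeping of $\theta_i=(2i-1)/(2m)$, $\varepsilon=h^m$, and the reiteration identity $(H^m,H^{2m})_{\theta_i,1}=B^{m+i-1/2}_{2,1}$ all check out, and the multiplicative-trace estimates $\|\nabla^{m-1-j}u\|_{L^2(\partial\Omega)}\lesssim h^{j+1/2}|u|_{H^m(\Omega)}$ are exactly right. Worth noting: your $m$-fold integration by parts is precisely what the paper does for the \emph{exterior} term $B_{\Omegaext}$ in Lemma~\ref{lemma:exterior-domain-part}, where it is unavoidable because one must reach $\Delta^m E^\Omega\widetilde f=0$ to annihilate the interior contribution; on $\Omega$ there is no such vanishing, so the paper's shortcut (one integration by parts plus Corollary~\ref{cor:duchon} on the remaining interior term) is the leaner choice, while yours gives a more uniform treatment of both halves of (\ref{eq:nuce-0}) at the cost of heavier machinery.
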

\begin{proof}
Let $\widetilde f \in H^{m+1}(\Omega)$. Integration by parts once gives 
\begin{align}
\label{eq:nuce-1}
 \Bigl| B_\Omega(&f - If,\widetilde f) \Bigr|  \lesssim  \\
\nonumber 
& 
\|\nabla^{m-1}(f - If)\|_{L^2(\partial\Omega)} 
\|\nabla^{m}\widetilde f\|_{L^2(\partial\Omega)} + 
\|\nabla^{m-1} (f - If)\|_{L^2(\Omega)} \|\nabla^{m+1}\widetilde f\|_{L^2(\Omega)}. 
\end{align}
The multiplicative trace inequality 
$\|z\|^2_{L^2(\partial\Omega)} \lesssim \|z\|_{L^2(\Omega)} \|z\|_{H^1(\Omega)}$, 
Corollary~\ref{cor:duchon} with $k = m-1$, and the trace estimate 
$\|\nabla^m z\|_{L^2(\partial\Omega)} \lesssim \|z\|_{B^{m+1/2}_{2,1}(\Omega)}$ 
yield 
\begin{align*}
& \left| B_\Omega(f - If,\widetilde f) \right|  \lesssim   \\
&
\left[ \|\nabla^{m-1}(f - If)\|^{1/2}_{L^2(\Omega)} \|f - If\|^{1/2}_{H^m(\Omega)} 
         \right]\|\nabla^m \widetilde f\|_{L^2(\partial\Omega)} + 
\|\nabla^{m-1} (f - If)\|_{L^2(\Omega)} \|\nabla^{m+1}\widetilde f\|_{L^2(\Omega)} \\
&\lesssim \left[ 
h^{1/2} |f - If|_{H^m(\Omega)} \|\nabla^m \widetilde f\|_{L^2(\partial\Omega)} + 
h |f - If|_{H^m(\Omega)} 
\|\nabla^{m+1} \widetilde f\|_{L^2(\Omega)} \right]\\
&\lesssim
\left[h^{1/2} \|\widetilde f\|_{B^{m+1/2}_{2,1}(\Omega)} + h \|\widetilde f\|_{H^{m+1}(\Omega)}\right]
|f - If|_{H^m(\Omega)}. 
\end{align*}
We conclude that the linear functional $\widetilde f \mapsto B_\Omega(f - If,\widetilde f)$ 
satisfies 
\begin{eqnarray*}
|B_\Omega( f- If,\widetilde f)| &\leq& C |f - If|_{H^m(\Omega)} \|\widetilde f\|_{H^m(\Omega)}, \\
|B_\Omega( f- If,\widetilde f)| &\leq& C |f - If|_{H^m(\Omega)} 
\left[ h^{1/2} \|\widetilde f\|_{B^{m+1/2}_{2,1}(\Omega)} + h \|\widetilde f\|_{H^{m+1}(\Omega)}\right]; 
\end{eqnarray*}
since $B^{m+1/2}_{2,1}(\Omega) = (H^{m}(\Omega),H^{m+1}(\Omega))_{1/2,1}$ 
Lemma~\ref{lemma:functional-interpolation} implies the 
estimate (\ref{eq:lemma:domain-part-10}). 
%\begin{equation}
%\label{eq:nuce-2}
%|B_\Omega(f - If,f)| \leq C_\Omega |f - If|_{H^m(\Omega)} h^{1/2} \|f\|_{B^{m+1/2}_{2,1}(\Omega)}. 
%\end{equation}
\end{proof}
%\begin{remark}
%\label{remk:domain_part_regularity}
%Both the integration by parts argument and the interpolation argument do not 
%use the smoothness of $\partial\Omega$ and thus generalize to Lipschitz domains $\Omega$. 
%\eremk
%\end{remark}
%------------------------
%\subsubsection{Estimating $B_{\Omegaext}(E^\Omega f - If,E^\Omega f)$}
%------------------------
We now turn to the second part of (\ref{eq:nuce-1}). The key step is to observe that the 
minimum norm extension $E^\Omega f$ satisfies the homogeneous differential equation 
$\Delta^m E^\Omega f = 0$ in $\Omegaext$. 
\begin{lemma}
\label{lemma:exterior-domain-part} 
Let $\Omega$ be a bounded Lipschitz domain with a sufficiently smooth boundary. Then:
\begin{align}
\label{eq:lemma:exterior-domain-part-10} 
\left| B_{\Omegaext}(E^\Omega f - If,E^\Omega f)\right| \leq C_\Omega h^{1/2} |f - If|_{H^m(\Omega)} \|f\|_{B^{m+1/2}_{2,1}(\Omega)} . 
\end{align}
\end{lemma}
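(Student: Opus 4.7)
The plan is to define the linear functional $l\colon H^m(\Omega)\to\mathbb R$ by $l(g) := B_{\Omegaext}(E^\Omega f - If,\, E^\Omega g)$, establish the bound $|l(g)| \le C\, h^{1/2}\,|f-If|_{H^m(\Omega)}\,\|g\|_{B^{m+1/2}_{2,1}(\Omega)}$, and then set $g = f$. I will produce this estimate for $l$ by combining a coarse $H^m$ bound with a refined integration-by-parts bound, interpolating between them via Lemma~\ref{lemma:functional-interpolation}.

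The coarse bound follows from Cauchy--Schwarz on $B_{\Omegaext}$ together with Lemma~\ref{lemma:norm-EOmega} and (\ref{eq:minimum-norm-extension-bound}): $|l(g)| \le |E^\Omega f - If|_m \,|E^\Omega g|_m \le C\,|f-If|_{H^m(\Omega)}\,\|g\|_{H^m(\Omega)}$. For the refined bound I restrict to $g\in H^{2m}(\Omega)$, where Corollary~\ref{cor:shift-theorem} (applied with $j=m$) gives $E^\Omega g\in H^{2m}(B_R(0)\setminus\overline{\Omega})$ and by construction $\Delta^m E^\Omega g = 0$ in $\Omegaext$. I then integrate by parts $m$ times in $B_{\Omegaext}(E^\Omega f - If,\, E^\Omega g)$, shifting all derivatives onto $E^\Omega g$. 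The interior term $\pm\int_{\Omegaext}(E^\Omega f - If)\,\Delta^m E^\Omega g$ vanishes, contributions at infinity are suppressed by the $C^\infty_0(\mathbb R^d)$-density argument already exploited in Lemma~\ref{lemma:norm-EOmega}, and only boundary contributions on $\partial\Omega$ of the schematic form
\begin{equation*}
l(g) = \sum_{j=0}^{m-1}\int_{\partial\Omega} T_j\bigl(\gamma_0^c\nabla^j(E^\Omega f - If)\bigr)\, S_j\bigl(\gamma_0^c\nabla^{2m-1-j} E^\Omega g\bigr)\,d\sigma
\end{equation*}
survive, where $T_j,S_j$ are universal multilinear contractions with the exterior unit normal.

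Each boundary factor can be estimated with the tools already assembled. Because $(E^\Omega f - If)|_\Omega = f - If$, traces from $\Omegaext$ coincide with interior traces, and the multiplicative trace inequality of Lemma~\ref{lemma:Besov-trace} combined with Corollary~\ref{cor:duchon} gives $\|\nabla^j(f - If)\|_{L^2(\partial\Omega)} \le C\, h^{m-j-1/2}\,|f-If|_{H^m(\Omega)}$ for $j = 0,\dots,m-1$. Corollary~\ref{cor:shift-theorem} bounds the other factor by $\|\gamma_0^c\nabla^{m+r}E^\Omega g\|_{L^2(\partial\Omega)} \le C\,\|g\|_{B^{m+r+1/2}_{2,1}(\Omega)}$ with $r = m-1-j$. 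Summing and relabeling $k := m-1-j$,
\begin{equation*}
|l(g)| \le C\,|f-If|_{H^m(\Omega)}\,\sum_{k=0}^{m-1} h^{k+1/2}\,\|g\|_{B^{m+k+1/2}_{2,1}(\Omega)}.
\end{equation*}
Enlarging the right-hand side by the trivially valid term $C\,h^m\,\|g\|_{H^{2m}(\Omega)}$, the hypotheses of Lemma~\ref{lemma:functional-interpolation} are met with $X_0 = H^m(\Omega)$, $X_1 = H^{2m}(\Omega)$, $\varepsilon = h^m$, and $\theta_k := (k+1/2)/m \in (0,1)$, since the reiteration theorem identifies $(H^m(\Omega), H^{2m}(\Omega))_{\theta_k,1} = B^{m+k+1/2}_{2,1}(\Omega)$. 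The conclusion of the lemma at the smallest index $\theta_1 = 1/(2m)$ gives $|l(g)| \le C\, h^{1/2}\,|f-If|_{H^m(\Omega)}\,\|g\|_{B^{m+1/2}_{2,1}(\Omega)}$, and specialization to $g = f$ yields (\ref{eq:lemma:exterior-domain-part-10}).

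The principal obstacle will be the integration-by-parts step in the unbounded exterior: identifying the $m$ boundary contributions on $\partial\Omega$ with their correct signs and tangential/normal structure for arbitrary $m$, and rigorously verifying via the $C^\infty_0(\mathbb R^d)$-density in $\BLm$ that no spurious terms survive at infinity. Once this polyharmonic Green's identity is in place, the remaining estimates are essentially bookkeeping using Corollary~\ref{cor:duchon}, Corollary~\ref{cor:shift-theorem}, the multiplicative trace inequality, and Lemma~\ref{lemma:functional-interpolation}.
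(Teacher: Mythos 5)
Your proposal is correct and follows essentially the same route as the paper's proof: $m$-fold integration by parts in $\Omegaext$ using $\Delta^m E^\Omega g = 0$, estimation of the boundary traces via the multiplicative trace inequality and Corollary~\ref{cor:duchon} on one side and Corollary~\ref{cor:shift-theorem} on the other, and then an application of Lemma~\ref{lemma:functional-interpolation}. The only (inconsequential) deviation is your choice $X_1 = H^{2m}(\Omega)$, $\varepsilon = h^m$ with an added dummy endpoint term, whereas the paper takes $X_1 = B^{2m-1/2}_{2,1}(\Omega)$, $\varepsilon = h^{m-1/2}$ so that the last boundary term $j=m$ itself fills the $\varepsilon\|\cdot\|_{X_1}$ slot; both bookkeeping choices are valid.
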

\begin{proof} 
Let $\widetilde f \in H^{2m}(\Omega)$. By Corollary~\ref{cor:shift-theorem}, we have 
$E^\Omega \widetilde f \in H^{2m}(B_R(0)\cap \Omegaext)$ for every $R>0$ sufficiently large. 
Furthermore, $\Delta^m E^\Omega \widetilde f  = 0$ in $\Omegaext$. Next, $m$-fold integration by parts yields 
\begin{equation}
\label{eq:nuce-3} 
\left| B_{\Omegaext}( E^\Omega f - If,E^\Omega \widetilde f) \right| 
\lesssim  \sum_{j=1}^{m} \|\nabla^{m-j}(E^\Omega f - If)\|_{L^2(\partial\Omega)} 
                       \|\gamma_0^c \nabla^{m+j-1} E^\Omega \widetilde f \|_{L^2(\partial\Omega)}. 
\end{equation}
The integration by parts does not produce any terms ``at infinity'' since $C^\infty_0({\mathbb R}^d)$ is 
dense in $\BLm$ (in the sense described in \cite[Thm.~{10.40}]{wendland05}) and thus 
$E^\Omega f - If \in \BLm$ can be approximated by such compactly supported functions. 

Since $\nabla^{j} E^\Omega f = \nabla^j f$ on $\partial\Omega$ for $j=0,\ldots,m-1$, we 
use again the multiplicative trace inequality and Corollary~\ref{cor:duchon} to get 
\begin{align}
\nonumber 
\left| B_{\Omegaext}( E^\Omega f - If,E^\Omega \widetilde f) \right| 
& \leq 
C |f - If|_{H^m(\Omega)} \sum_{j=1}^m h^{-1/2+j} \|\gamma_0^c \nabla^{m+j-1} E^\Omega \widetilde f\|_{L^2(\partial\Omega)} \\
\label{eq:nuce-4} 
& \stackrel{(\ref{eq:cor:shift-theorem-10})}{\leq}
C |f - If|_{H^m(\Omega)} \sum_{j=1}^m h^{-1/2+j} \|\widetilde f\|_{B^{m+j-1/2}_{2,1}(\Omega)}.  
\end{align}
We reduce the regularity requirement on $\widetilde f$ by applying 
Lemma~\ref{lemma:functional-interpolation} to 
$\widetilde f \mapsto B_{\Omegaext}(E^\Omega f - If,E^\Omega \widetilde f)$:  We observe that 
the reiteration theorem of interpolation allows us to identify
$$
B^{m+j-1/2}_{2,1}(\Omega) = (H^m(\Omega),B^{2m-1/2}_{2,1}(\Omega))_{\theta_j,1}, 
\qquad \theta_j = \frac{j-1/2}{m-1/2}; 
$$
hence, 
we get (\ref{eq:lemma:exterior-domain-part-10}) 
from an application of Lemma~\ref{lemma:functional-interpolation} with $X_0 = H^m(\Omega)$, 
$X_1 = B^{2m-1/2}_{2,1}(\Omega)$ and $\varepsilon = h^{m-1/2}$ 
since we have additionally
the stability bound 
$|B_{\Omegaext}(E^\Omega f - If,E^\Omega \widetilde f)|\leq 
C |f - If|_{H^m(\Omega)} \|\widetilde f\|_{H^m(\Omega)}$ by 
Lemma~\ref{lemma:norm-EOmega}  and 
(\ref{eq:cor:shift-theorem-1}). 
%\begin{equation}
%\label{eq:nuce-10}
%\left| B_{\Omegaext}(f - If,\widetilde f)\right| 
%\leq C |f - If|_{H^m(\Omega)} h^{1/2} \|\widetilde f\|_{B^{m+1/2}_{2,1}(\Omega)}. 
%\end{equation}
\end{proof}
\begin{remark}[Generalization to Lipschitz domains]
\label{rem:discussion} 
The proof Proposition~\ref{prop:main} relies on three ingredients: a) integration by parts arguments to treat
$B_\Omega$, 
b) the approximation properties given in \cite{duchon78} of the thin plate spline interpolation operator $I$, 
and c) regularity properties of $u:= E^\Omega f$. 
Ingredients a) and b) are already formulated for Lipschitz domains. 
However, the regularity properties of $u = E^\Omega f$ are delicate in their generalization to the case of Lipschitz domains. 
We note that $u$ solves in $\Omegaext$ the Dirichlet problem 
$$
-\Delta^m u = 0 \quad \mbox{ in $\Omegaext$},
%\qquad u|_{\partial\Omega} = f|_{\partial\Omega}, 
\qquad \partial_n^{j-1} u|_{\partial\Omega} = \partial_n^{j-1} f|_{\partial\Omega}, 
\qquad j=1,\ldots,m-1. 
$$
and \cite{verchota90,pipher-verchota95,dahlberg-kenig-pipher-verchota97} 
show a shift theorem by $1/2$ in the sense that 
for $f \in B^{m+1/2}(\partial\Omega)$, one can control $\nabla^j u|_{\partial\Omega}$ for $j=0,\ldots,m$. 
This together with careful integration by parts arguments for the treatment of $B_\Omegaext$ allow for an
extension of the proof of Proposition~\ref{prop:main} to Lipschitz domain and will be given in \cite{melenk17}. 
\eremk
\end{remark}
%----------------------------------------------------------------
\section{Numerical example}
\label{sec:numerics}
%----------------------------------------------------------------
We illustrate Proposition~\ref{prop:main} for the case $m = d = 2$, i.e., the classical
thin plate splines.  
We employ uniformly distributed
nodes on two geometries, the unit square $\Omega_1 = (0,1)^2$ and the L-shaped domain 
$\Omega_2 = (-1/2,1/2)^2\setminus [0,1/2]^2$. As usual, we denote $r: x\mapsto \|x\|_2$. 
We interpolate 4 functions with different characters: 
the functions $r^{1.05}$ and 
$r^{2.76}$, which are, for any $\varepsilon >0$, in $H^{2.05-\varepsilon}$ and $H^{3.76-\varepsilon}$, respectively, 
and the smooth functions 
$\exp(xy)$ and $F(x,y)$, where the so-called Franke function $F$ is given by 
\begin{align*} 
F(x,y) =&  0.75 \exp(-0.25 ((9x-2)^2+(9y-2)^2) + 
0.75 \exp(- (9 x+1)^2/49 - 0.1(9y + 1)^2)+ \\
& 0.5  \exp(- 0.25((9x-7)^2  +  (9y-3)^2)
- 0.2\exp(-(9x-4)^2- (9y-7)^2). 
\end{align*}
The results are presented in Fig.~\ref{fig:results} and corroborate the assertions of 
Proposition~\ref{prop:main}, which read, for $m=2$, 
$\|f - If\|_{L^\infty(\Omega)} \leq C h^{1+\delta} \|f\|_{H^{2+\delta}(\Omega)}$ with $\delta \in [0,1/2)$ and 
$\|f - If\|_{L^\infty(\Omega)} \leq C h^{3/2} \|f\|_{B^{5/2}_{2,1}(\Omega)}$. 
These numerical results were first presented at the conference \cite{bonn-conference05}. 
\begin{figure}
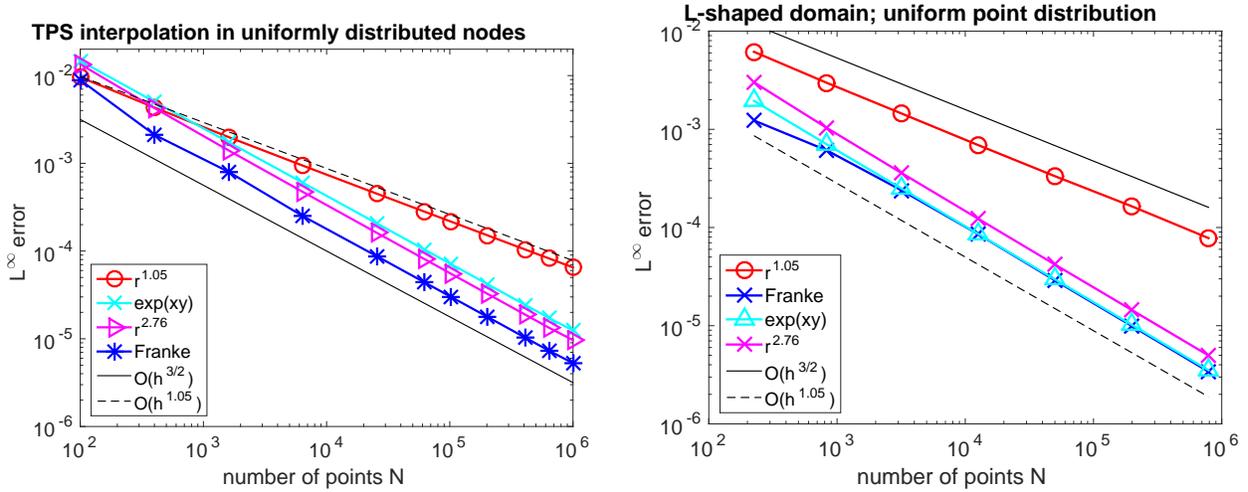

\includegraphics[width=0.49\textwidth]{tps_approximation_million_vs_N.eps}
\includegraphics[width=0.49\textwidth]{lshaped.eps}
\caption{\label{fig:results} Convergence of TPS interpolation. Left: square $\Omega_1$. Right: L-shaped domain $\Omega_2$.}
\end{figure}
%----------------------------------------------------------------------------------------
\subsection{${\mathcal H}$-matrix techniques for solving the TPS interpolation problem}
%----------------------------------------------------------------------------------------
The numerical solution of the thin plate interpolation
problem is numerically challenging since the system matrix is fully populated. Nevertheless, several
approaches for fast solution techniques exist. 
For example, the matrix-vector multiplication can be realized in log-linear complexity using techniques from 
fast multipole methods. This leads to efficient solution strategies based on Krylov subspace methods 
provided suitable preconditioners are available. We refer to \cite[Sec.~{15}]{wendland05}, 
\cite[Sec.~{7.3}]{buhmann03} as starting points for a literature discussion. For our calculations, 
we employed related techniques based on the concept of ${\mathcal H}$-matrices, \cite{hackbusch99,hackbusch15}. 
${\mathcal H}$-matrices come with an (approximate) factorization that can either be used as a solver 
(if the approximation is sufficiently accurate) 
or as a preconditioner in an iterative environment. The latter use has been advocated, in a different
context, for example, in \cite{bebendorf05,grasedyck05}. 

For the case $m = 2 = d$, the interpolation problem (\ref{eq:interpolation-representation}) results
in a linear system of equations of the form 
\begin{equation} 
\label{eq:TPS-matrix}
\left(\begin{array}{cc}{\mathbf P}^\top & 0 \\ {\mathbf G} & {\mathbf P} 
      \end{array}
\right) 
\left(\begin{array}{c} {\mathbf c} \\ \lambda 
      \end{array}
\right) = 
\left(\begin{array}{c} 0 \\ {\mathbf f}
      \end{array}
\right), 
\qquad {\mathbf G}_{ij} = \phi_2(\|x_i - x_j\|_2), \quad i,j=1,\ldots,N. 
\end{equation}
The matrix ${\mathbf P}^{N \times 3}$ is obtained by selecting a basis $\{b_1,b_2,b_3\}$ of 
${\mathbb P}_{1}$ (e.g., $\{1, x,y\}$) and setting ${\mathbf P}_{i,j} = b_j(x_i)$. 
The vector ${\mathbf f} \in {\mathbb R}^N$ collects
the values $f(x_i)$, the vector ${\mathbf c} \in {\mathbb R}^N$ the sought coefficients $c_i$, and the 
vector $\lambda \in {\mathbb R}^3$ is the  Lagrange multiplier for the constrained problem (\ref{eq:interpolation-representation}). The function $\phi_2(z) = z^2 \log z$ is smooth for $z > 0$. 
Lemma~\ref{lemma:chebyshev-interpolation} below shows that the function 
$(x,y) \mapsto \phi_2(\|x - y\|_2)$ can be approximated by a polynomial, which is in particular 
a \emph{separable} function, i.e. a short sum of products of functions of $x$ and $y$, only. This in turn 
implies that the fully populated matrix ${\mathbf G}$ can in fact be approximated as a blockwise low-rank matrix, 
in particular in the form of an ${\mathcal H}$-matrix, \cite{hackbusch99,hackbusch15}. 
%The matrix format of ${\mathcal H}$-matrices comes 
%with an (approximate) arithmetic, in particular, LU- and Cholesky decompositions are provided with log-linear 
%complexity. 

By forming a Schur complement, the linear system of (\ref{eq:TPS-matrix}) can be transformed to 
SPD form. To that end, we select three points and rearrange the problem (\ref{eq:TPS-matrix}) as 
$$
\left(
\begin{array}{ccc} P_1^\top & 0 & P_2^\top \\
                   G_{11} & P_1 & G_{12} \\
                   G_{21} & P_2 & G_{22} 
\end{array}
\right) 
\left( 
\begin{array}{c}
{\mathbf c}_1 \\ \lambda \\ {\mathbf c}_2
\end{array}
\right)
 = 
\left( 
\begin{array}{c}
0 \\ {\mathbf f}_1 \\ {\mathbf f}_2 
\end{array}
\right)
\qquad G_{11} \in {\mathbb R}^{3 \times 3}, \quad 
G_{22} \in {\mathbb R}^{(N-3) \times (N-3)}, 
$$
where the vectors ${\mathbf c}_1$, ${\mathbf f}_1 \in {\mathbb R}^3$ and 
${\mathbf c}_2$, ${\mathbf f}_2 \in {\mathbb R}^{N-3}$ result from the permutations. The Schur complement 
$$
{\mathbf S}:= G_{22} - \left(\begin{array}{cc} G_{21} & P_2\end{array}\right)
                       \left(\begin{array}{cc} P_1^\top & 0 \\ G_{11} & P_1\end{array}\right)^{-1} 
                       \left(\begin{array}{c} P_2^\top \\ G_{12} \end{array}\right)
$$
is SPD. 
We computed an (approximate) Cholesky factorization of ${\mathbf S}$ using the library HLib \cite{Hlib}. 
This factorization can be employed as a preconditioner for a CG iteration. The ${\mathcal H}$-matrix
structure of ${\mathbf S}$ was ensured by so-called geometric clustering of the 
interpolation points. Specifically, we used this hierarchical structure to set up $G_{22}$ by 
approximating its entries with the Chebyshev interpolant as described in 
Lemma~\ref{lemma:chebyshev-interpolation}. 
In the interest of efficiency, the thus obtained ${\mathcal H}$-matrix approximation of $G_{22}$ was 
further modified by using SVD-based compression of blocks as well as coarsing 
of the block structure (these tools are provided by HLib). The matrix ${\mathbf S}$ 
is a rank-$3$ update of the matrix $G_{22}$, which can also be realized in HLib.
\begin{lemma}
\label{lemma:chebyshev-interpolation}
Let $\eta > 0$ be given. For any (closed) axiparallel boxes 
$\sigma$, $\tau \subset {\mathbb R}^2$ and a polynomial degree $p \in {\mathbb N}_0$ 
denote by $I^{Cheb}_p: C(\sigma \times \tau) \rightarrow {\mathbb Q}_p$
the tensor product Chebyshev interpolation operator associated with $\sigma \times \tau$.  Then there 
are constants $C$, $b > 0$ depending only on $\eta$ such that under the condition 
$\max\{ \operatorname*{diam}(\sigma),\operatorname*{diam}(\tau)\} \leq \eta \operatorname*{dist}(\sigma,\tau)$ there
holds 
$$
\sup_{(x,y) \in \sigma\times \tau} |\phi_2(\|x - y\|_2) - I^{Cheb}_{p} \phi_2(\|x-y\|)| 
\leq C | \operatorname*{dist}(\sigma,\tau)|^2 \left(1 + |\log \operatorname*{dist}(\sigma,\tau)|\right)  e^{-b p}. 
$$
\end{lemma}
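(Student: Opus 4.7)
The plan is to reduce to the standard tensor--product Chebyshev interpolation error estimate by extending $(x,y)\mapsto \phi_2(\|x-y\|_2)$ holomorphically in four complex variables to a polyellipse of size controlled only by $\eta$. The standard one-dimensional result (see, e.g., classical references on Chebyshev interpolation) states that a function holomorphic in the Bernstein ellipse $\mathcal{E}_\rho$ (with $\rho>1$) is approximated on $[-1,1]$ by its degree-$p$ Chebyshev interpolant with error $O(\rho^{-p})$ times the maximum modulus on $\mathcal{E}_\rho$; tensoring gives the same rate on $[-1,1]^4$ with a mild multiplicative constant.

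First I would affinely rescale each coordinate so that $\sigma$ and $\tau$ become $[-1,1]^2$; under this map the four complex coordinates of the extended interpolation domain live in $\mathcal{E}_\rho^4$, and a point of $\mathcal{E}_\rho^4$ corresponds in the original scale to a point whose displacement from $\sigma\times\tau$ in each coordinate is bounded by $C(\rho)\max\{\operatorname{diam}\sigma,\operatorname{diam}\tau\}\le C(\rho)\eta\operatorname{dist}(\sigma,\tau)$. Second, I would write $\phi_2(\|x-y\|_2)=\tfrac12 s\log s$ with $s:=(x_1-y_1)^2+(x_2-y_2)^2$. Since $s$ is a polynomial it extends entirely in the four complex variables; the point of the argument is to show that for $\rho>1$ chosen sufficiently close to $1$ depending only on $\eta$, the image of $\mathcal{E}_\rho^4$ under $s$ avoids the branch cut $(-\infty,0]$. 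Concretely, for real $(x,y)$ with $x\in\sigma$, $y\in\tau$ one has $\operatorname{Re} s=\|x-y\|_2^2\ge \operatorname{dist}(\sigma,\tau)^2$, and perturbing each of the four coordinates by at most $C(\rho)\eta\operatorname{dist}(\sigma,\tau)$ in the complex direction changes $\operatorname{Re} s$ and $\operatorname{Im} s$ by quantities quadratic, resp.\ linear, in this perturbation times $\operatorname{dist}(\sigma,\tau)$. Choosing $\rho-1$ small enough in terms of $\eta$ therefore keeps $s$ in a sector $|\arg s|\le \pi-\delta$ with $c(\eta)\operatorname{dist}(\sigma,\tau)^2\le |s|\le C(\eta)\operatorname{dist}(\sigma,\tau)^2$.

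Third, on this sector $\log s$ is holomorphic and bounded by $|\log \operatorname{dist}(\sigma,\tau)|+C(\eta)$, hence
\[
\sup_{\mathcal{E}_\rho^4}\bigl|\phi_2(\|x-y\|_2)\bigr|
\;\le\; C(\eta)\,\operatorname{dist}(\sigma,\tau)^2\bigl(1+|\log\operatorname{dist}(\sigma,\tau)|\bigr).
\]
Finally, I would apply the tensor--product Chebyshev error bound, which for a function holomorphic on $\mathcal{E}_\rho^4$ yields an $L^\infty$ error on $[-1,1]^4$ of order $\rho^{-p}$ times the above supremum, with a constant depending only on $\rho$. Setting $b:=\log\rho>0$, which depends only on $\eta$, produces exactly the estimate in the statement.

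The main obstacle is the geometric step that ensures the complex-extended $s$ stays in a slit plane. The tension is that $s$ vanishes whenever $(x_1-y_1)^2=-(x_2-y_2)^2$, which is possible for complex coordinates even when real $\operatorname{dist}(\sigma,\tau)>0$. The admissibility ratio $\eta$ is precisely what forces the complex perturbation to be small compared to the real separation, so the quantitative choice of $\rho=\rho(\eta)$ in this step is where the admissibility hypothesis is genuinely used; everything else is then a packaging of standard one-variable Chebyshev theory into four tensor directions.
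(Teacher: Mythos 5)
Your argument is correct and arrives at the same estimate, but it takes a route that is genuinely different from the paper's. The paper invokes the machinery of B\"orm--Melenk: it reduces the tensor--product error to a sum of \emph{one--variable} best--approximation problems, with the other three coordinates frozen at real values in the box, via the bound
$\|f - I^{Cheb}_p f\|_{L^\infty(Q)} \le (1+\Lambda_p)\Lambda_p^{3}\sum_i \sup_x \inf_\pi \|f_{x,i}-\pi\|_{L^\infty(-1,1)}$.
Each univariate slice then has the special structure $t\mapsto \phi_2(\|\mathfrak d - t\mathfrak p\|_2)$, and the paper cites holomorphy of the \emph{norm function} $\mathfrak n(t)=\|\mathfrak d - t\mathfrak p\|_2$ on a stadium $U_r$ with range in the right half--plane, so that $\phi_2(\mathfrak n(t))=\mathfrak n(t)^2\log\mathfrak n(t)$ is holomorphic and controllable there. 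You instead extend to the full polyellipse $\mathcal E_\rho^4$ and work with the \emph{squared} distance $s=(x_1-y_1)^2+(x_2-y_2)^2$, which is a polynomial and therefore extends entirely; the only delicate point is keeping $s$ off the branch cut of $\log$. That is the same geometric content (admissibility forces the imaginary perturbation to be small compared to the real separation), but it trades the one--variable stadium--holomorphy lemma for an elementary computation with $\operatorname{Re} s$ and $\operatorname{Im} s$, and it requires full four--variable holomorphy rather than the weaker ``holomorphy in each slice'' that the paper uses. Both routes absorb the polylogarithmic Lebesgue--constant factors into the exponential by slightly shrinking $\rho$. Your approach is arguably more self--contained (no appeal to holomorphy properties of $\mathfrak n$), while the paper's is more modular since it directly reuses a general lemma built for exactly this kernel--approximation setting. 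One small point worth making explicit in a write--up: to bound $|\log s|$ you need not only $|s|\sim \operatorname{dist}(\sigma,\tau)^2$ but also that $s$ stays in a sector $|\arg s|\le \pi-\delta$; your displacement estimate gives $|\operatorname{Im} s|\lesssim (\rho-1)\eta\,\operatorname{dist}^2$ against $\operatorname{Re} s\gtrsim \operatorname{dist}^2$, so this is indeed available once $\rho-1$ is chosen small relative to $1/\eta$, but the sector bound is what actually controls the imaginary part of $\log s$ and should be stated.
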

\begin{proof}
The proof follows with the tool developed in \cite{boerm-melenk17}. Consider 
$Q:= \prod_{i=1}^n [a_i,b_i] \subset {\mathbb R}^n$ and a function $f \in C(Q;{\mathbb C})$. Denote by $\Lambda_p$ 
the Lebesgue constant for univariate Chebyshev interpolation (note that $\Lambda_p = O(\log p)$). 
Introduce, for each $x \in Q$ and each $i \in \{1,\ldots,n\}$, the univariate function 
$f_{x,i}:[-1,1] \rightarrow {\mathbb C}$ by 
$f_{x,i}(t):= f(x_1,\ldots,x_{i-1}, (a_i+b_i)/2 + t(b_i-a_i)/2,x_{i+1},\ldots,x_n)$. 
Then, standard tensor product arguments 
\cite[Lemma~{3.3}]{boerm-melenk17} 
%\cite[Lemma~{14}]{boerm-melenk15} 
show that the tensor product Chebyshev interpolation error is bounded by 
$$
\|f - I^{Cheb}_p f\|_{L^\infty(Q)} \leq (1 + \Lambda_p) \Lambda_p^{n-1} \sum_{i=1}^n  
\sup_{x \in Q} \inf_{\pi \in {\mathbb P}_p} \|f_{x,i} - \pi\|_{L^\infty(-1,1)}. 
$$
The best approximation problems
$\inf_{\pi \in {\mathbb P}_p} \|f_{x,i} - \pi\|_{L^\infty(-1,1)}$ in turn lead to exponentially small (in $p$) 
errors, provided the holomorphic extensions of the functions $f_{x,i}$ can be controlled. We show this for the 
case $f(x_1,x_2,x_3,x_4) = \phi_2(\|(x_1,x_2) - (x_3,x_4)\|_2)$ under consideration here. Note that 
$f_{x,1}(t) = \phi_2(\|{\mathfrak d} - t{\mathfrak p}\|_2)$, where ${\mathfrak d} = ((a_1+b_1)/2-x_3, x_2-x_4)^\top$
and ${\mathfrak p} = ((a_1-b_1)/2,0)^\top$. Note 
$\|{\mathfrak d}\|_2 \leq (1+\eta) \operatorname*{dist}(\sigma,\tau)$ and 
$\|{\mathfrak p}\|_2 \leq 1/2 \max\{\operatorname*{diam}(\sigma),\operatorname*{diam}(\tau)\} 
\leq \eta/2 \operatorname*{dist}(\sigma,\tau)$.  
As is shown 
in 
\cite[Lemma~{3.6}, proof of Thm.~{3.13}]{boerm-melenk17}, 
%\cite[Lemma~{16}, proof of Thm.~{27}]{boerm-melenk15}, 
the holomorphic extension of the function 
${\mathfrak n}: t \mapsto \|{\mathfrak d} - t {\mathfrak p}\|_2$ is holomorphic on $U_r:= \cup_{t \in [-1,1]} B_r(t)$
with $r = \operatorname*{dist}(\sigma,\tau)/\|{\mathfrak p}\|_2 \ge 2/\eta$ and maps into the left half plane 
${\mathbb C}_+ = \{z \in {\mathbb C}\,|\, \operatorname*{Re} z > 0\}$. We note that 
$\sup_{z \in U_r} |{\mathfrak n}(z)| \leq \|{\mathfrak d}\|_2 + r \|{\mathfrak p}\|_2
\leq (2+\eta) \operatorname*{dist}(\sigma,\tau)$. In view of $\phi_2(z) = z^2 \log z$, we conclude 
$\sup_{z \in U_r} |f_{x,i}(z)| \leq C 
(\operatorname*{dist}(\sigma,\tau))^2 (1 + 
|\log \operatorname*{dist}(\sigma,\tau)|)$ for a constant $C > 0$ that depends solely on $\eta$. We finish the 
proof by observing that there is $\rho > 1$ (depending only on $r$ and thus on $\eta$) 
such that $U_r$ contains the Bernstein ellipse ${\mathcal E}_\rho$ 
(see \cite[Lemma~{3.12}]{boerm-melenk17}).
%(see \cite[Lemma~{24}]{boerm-melenk15}).
A classical polynomial approximation result 
(see, e.g., \cite[Lemma~{3.11}]{boerm-melenk17}) 
%(see, e.g., \cite[Lemma~{23}]{boerm-melenk15}) 
concludes the proof. 
\end{proof}
%----------------------------------------------------------------------------------------
\subsection{Edge effects and concentrating points at the boundary}
%----------------------------------------------------------------------------------------
The convergence behavior of thin plate splines is limited by edge effects. Above, we mentioned that 
imposing certain boundary conditions on $f$ mitigates this effect. 
An alternative is to suitably concentrate 
points near $\partial\Omega$. Without proof, we announce the following result: 
\begin{proposition}
\label{prop:bdy-concentrated}
Assume that the points $x_i$, $i=1,\ldots,N$, satisfy for a $\delta > 0$ sufficiently small 
\begin{equation}
\forall x \in \Omega \colon \qquad 
\inf_{i=1,\ldots,N} \operatorname*{dist}(x,x_i) \leq 
\delta \min \left\{ h_{min} + \operatorname*{dist}(x,\partial\Omega) , h\right\}. 
\end{equation}
Then, for $f \in H^{m+1}(\Omega)$ there  holds $|f - If|_{H^m(\Omega)} \leq C \left( h_{min}^{1/2} + h\right)
|f|_{H^{m+1}(\Omega)}$. 
\end{proposition}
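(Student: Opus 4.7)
The plan is to extend the PDE-based proof of Proposition~\ref{prop:main} (Section~2.4) to exploit the boundary concentration of the nodes. I would start from $|f-If|_{H^m(\Omega)}\leq |E^\Omega f - If|_{H^m(\mathbb{R}^d)}$ and the decomposition~(\ref{eq:nuce-0}), aiming to bound both $B_\Omega(f-If, f)$ and $B_{\Omegaext}(E^\Omega f - If, E^\Omega f)$ by $C(h_{\min}^{1/2}+h)\,|f-If|_{H^m(\Omega)}\,\|f\|_{H^{m+1}(\Omega)}$.

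The key refinement is to improve the boundary trace estimate $\|\nabla^{m-1}(f-If)\|_{L^2(\partial\Omega)}\lesssim h^{1/2}|f-If|_{H^m(\Omega)}$ of Lemma~\ref{lemma:domain-part} to $\lesssim h_{\min}^{1/2}|f-If|_{H^m(\Omega)}$. I would do so via the strip trace inequality $\|u\|^2_{L^2(\partial\Omega)}\lesssim h_{\min}^{-1}\|u\|^2_{L^2(S_{h_{\min}})}+h_{\min}\|\nabla u\|^2_{L^2(S_{h_{\min}})}$, where $S_\epsilon:=\{x\in\Omega:\operatorname{dist}(x,\partial\Omega)<\epsilon\}$, applied to $u=\nabla^{m-1}(f-If)$, combined with a \emph{local} Duchon-type estimate $\|\nabla^{m-1}(f-If)\|_{L^2(S_{h_{\min}})}\lesssim h_{\min}|f-If|_{H^m(\tilde S)}$ on a slight thickening of the strip; this is legitimate because the local fill distance inside $\tilde S$ is $O(h_{\min})$ under the hypothesis.

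With this refinement, $B_\Omega(f-If,f)$ is treated by one integration by parts as in Lemma~\ref{lemma:domain-part}: the volume contribution is $\lesssim h\,|f-If|_{H^m(\Omega)}\,\|f\|_{H^{m+1}(\Omega)}$ (supplying the $h$ in the final bound) and the boundary contribution is $\lesssim h_{\min}^{1/2}\,|f-If|_{H^m(\Omega)}\,\|\nabla^m f\|_{L^2(\partial\Omega)}\lesssim h_{\min}^{1/2}\,|f-If|_{H^m(\Omega)}\,\|f\|_{H^{m+1}(\Omega)}$ via the standard trace theorem. For $B_{\Omegaext}(E^\Omega f - If, E^\Omega f)$ I would perform $m$-fold integration by parts in $\Omegaext$ as in Lemma~\ref{lemma:exterior-domain-part}, obtaining the boundary-product sum~(\ref{eq:nuce-3}). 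The $j=1$ term yields an $h_{\min}^{1/2}$ contribution via the refined boundary estimate above together with Corollary~\ref{cor:shift-theorem}, which bounds $\|\gamma_0^c\nabla^m E^\Omega f\|_{L^2(\partial\Omega)}$ by $\|f\|_{B^{m+1/2}_{2,1}(\Omega)}\lesssim\|f\|_{H^{m+1}(\Omega)}$. For $j\geq 2$ the first factor is at most $h\,|f-If|_{H^m(\Omega)}$ (since $h^{j-1/2}\leq h$ for $h\leq 1$, via Corollary~\ref{cor:duchon} and the standard multiplicative trace), and the higher-regularity Besov norms of $f$ appearing in the second factor are absorbed into $\|f\|_{H^{m+1}(\Omega)}$ by Lemma~\ref{lemma:functional-interpolation} applied with $X_0=H^m(\Omega)$ and $X_1=B^{2m-1/2}_{2,1}(\Omega)$, following the proof pattern of Lemma~\ref{lemma:exterior-domain-part}.

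The principal obstacle is the local Duchon-type estimate just used. Proposition~\ref{prop:duchon} is formulated globally in terms of the fill distance of the whole point set $X_N$, whereas the argument needs a localized analogue controlling $\|\nabla^{m-1}(f-If)\|_{L^2(S_{h_{\min}})}$ by $h_{\min}\,|f-If|_{H^m(\tilde S)}$ using only the boundary-concentrated subset of nodes. Establishing this requires either re-examining Duchon's argument on a family of overlapping patches of diameter $\sim h_{\min}$ within the thickening of $S_{h_{\min}}$---each patch containing enough interpolation points to support a local Bramble-Hilbert-type bound by hypothesis---or employing pointwise power-function estimates restricted to the strip. A secondary issue is ensuring that the improved $h_{\min}^{1/2}$ leading coefficient is preserved through the application of Lemma~\ref{lemma:functional-interpolation} in the exterior step, which requires slightly adapting that proof to accommodate a leading coefficient strictly smaller than the natural $\varepsilon^{\theta_1}$.
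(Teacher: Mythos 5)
The paper announces Proposition~\ref{prop:bdy-concentrated} explicitly ``without proof,'' so there is no internal argument to compare against; I evaluate your sketch on its own terms. Your overall strategy---rerun the PDE-based proof of Section~2.4, replacing the multiplicative trace inequality by a strip-trace estimate of width $h_{\min}$ combined with a \emph{local} Duchon-type bound on the boundary strip---is a natural one, and you correctly flag the local Duchon estimate as the central unproven ingredient; it is plausible because Duchon's own argument is local (covering by balls of radius $\sim h_{\min}$ with polynomial reproduction on each patch, for which the grading hypothesis supplies the points), but it is not an off-the-shelf citation and would need to be proved as a separate lemma.

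However, what you call a ``secondary issue'' is a genuine gap. For the exterior term you keep $\|\nabla^{m-j}(E^\Omega f - If)\|_{L^2(\partial\Omega)} \lesssim h^{j-1/2}|f-If|_{H^m(\Omega)}$ for $j\ge 2$ (and even weaken it to $h$) while improving only $j=1$ to $h_{\min}^{1/2}$, then hope to retain $h_{\min}^{1/2}$ through Lemma~\ref{lemma:functional-interpolation}. This cannot work: the proof of that lemma passes through $K(\varepsilon,f)\lesssim \varepsilon^{\theta_1}\|f\|_{X_{\theta_1}}$, so with $\varepsilon=h^{m-1/2}$ the conclusion is necessarily $\lesssim h^{1/2}\|f\|_{B^{m+1/2}_{2,1}(\Omega)}$, which for graded points (e.g.\ $h_{\min}=h^{2}$, the choice in Section~3) vastly exceeds the target $(h_{\min}^{1/2}+h)$; taking $\varepsilon=h_{\min}^{m-1/2}$ instead fails because your $j\ge 2$ coefficients $h^{j-1/2}$ then no longer sit below $\varepsilon^{\theta_j}=h_{\min}^{j-1/2}$. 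The clean repair is to apply the strip-trace-plus-local-Duchon refinement to \emph{every} boundary factor, obtaining $\|\nabla^{m-j}(E^\Omega f-If)\|_{L^2(\partial\Omega)}\lesssim h_{\min}^{\,j-1/2}|f-If|_{H^m(\Omega)}$ for all $j=1,\dots,m$ (this needs the local bound $\|\nabla^{m-j}(f-If)\|_{L^2(S_{h_{\min}})}\lesssim h_{\min}^{\,j}|f-If|_{H^m(\tilde S)}$ for each $j$, not only $j=1$); then Lemma~\ref{lemma:functional-interpolation} applies verbatim with $\varepsilon=h_{\min}^{m-1/2}$ and the exterior term contributes $\lesssim h_{\min}^{1/2}\|f\|_{B^{m+1/2}_{2,1}(\Omega)}\lesssim h_{\min}^{1/2}\|f\|_{H^{m+1}(\Omega)}$, with the $h$-contribution coming solely from the interior volume integral, as you say. (Alternatively one can split off the $j=1$ functional and interpolate the rest down to $H^{m+1}=(H^m,B^{2m-1/2}_{2,1})_{1/(m-1/2),2}$, but this is more convoluted.) A final small point: passing from $\|f\|_{H^{m+1}(\Omega)}$ to the stated seminorm $|f|_{H^{m+1}(\Omega)}$ requires the usual Deny--Lions/polynomial-reproduction step, as at the end of the proof of Lemma~\ref{lemma:norm-EOmega}.
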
 
Inserting the result of Proposition~\ref{prop:bdy-concentrated} in the estimates of Proposition~\ref{prop:duchon} 
shows that a factor $h_{min}^{1/2} + h$ can be gained in the convergence estimates. 
Fig.~\ref{fig:bdy-concentrated} presents numerical examples for the square $\Omega_1$ and the functions given 
in Sec.~\ref{sec:numerics}. We selected $h_{min} = h^2$ and distributed the points so as ensure the condition 
$$
\forall i \colon \qquad 
\min_{j\colon j\ne i} \|x_i - x_j\|_2 \gtrsim \min\left\{ h_{min} + \operatorname*{dist}(x,\partial\Omega), h\right\}. 
$$
For the present case $d = 2$, it can then be shown that the number of points $N$ is $O(h^{-2})$, which is 
also illustrated in Fig.~\ref{fig:bdy-concentrated}.
\begin{figure}
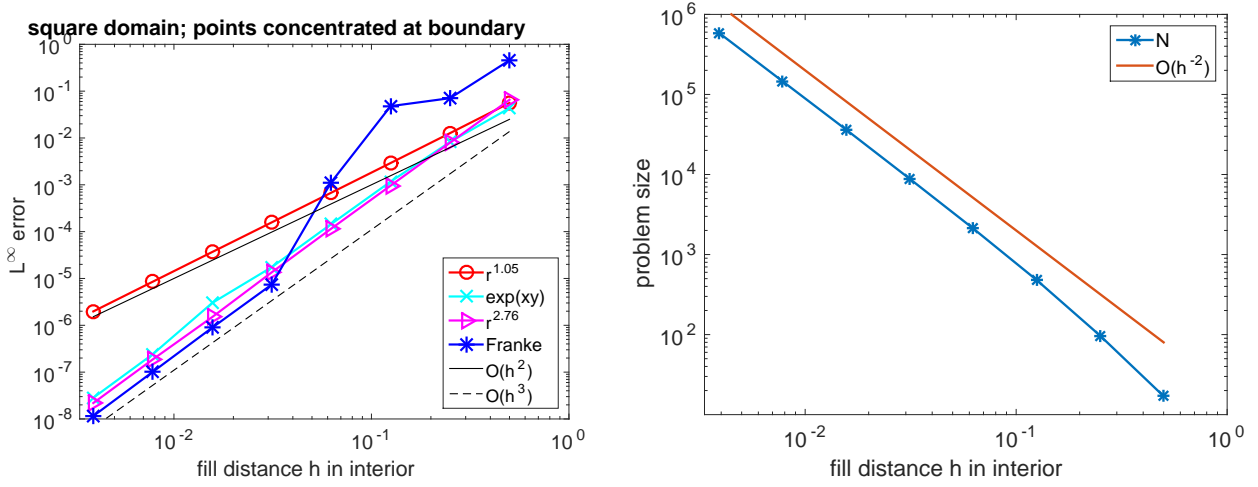

\includegraphics[width=0.49\textwidth]{tps_approximation_bdy_concentrated_1.eps}
\includegraphics[width=0.49\textwidth]{tps_approximation_bdy_concentrated_2.eps}
\caption{\label{fig:bdy-concentrated} Concentrating points near $\partial\Omega$. Left: Convergence. 
Right: problem size versus fill distance in the interior.}
\end{figure}
%------------------------
% \bibliographystyle{}
% \bibliography{}
\bibliography{nummech,eigene}
 \bibliographystyle{plain}
%------------------------------------------------------------------
\end{document}